\documentclass[12pt]{amsart}
\usepackage{amsfonts}
\usepackage{amsmath}
\usepackage{graphicx}
\usepackage{amssymb}
\setcounter{MaxMatrixCols}{30}
\setlength{\textwidth}{6.in}
\setlength{\oddsidemargin}{.25in}
\setlength{\evensidemargin}{.25in} 
\setlength{\textheight}{8in}
\setlength{\headheight}{2ex}

\newtheorem{theorem}{Theorem}[section]

\newtheorem{corollary}[theorem]{Corollary}

\newtheorem{lemma}[theorem]{Lemma}
\newtheorem{proposition}[theorem]{Proposition}

\theoremstyle{definition}

\theoremstyle{remark}
\newtheorem*{remark}{Remark}

\numberwithin{equation}{section}

\begin{document}

\title[Free boundary minimal immersions]{Minimal immersions of compact bordered Riemann surfaces with free boundary}
\author{Jingyi Chen}

\date{\today}

\thanks{2010 {\em Mathematics Subject Classification.} Primary 58E12;
Secondary 53C21, 53C43. \\
This work is partially supported by NSERC}

\address{Department of Mathematics, The University of British
Columbia, Vancouver, B.C. V6T 1Z2, Canada}

\email{jychen@math.ubc.ca}
\author{Ailana Fraser}
\email{afraser@math.ubc.ca}
\author{Chao Pang}
\email{ottokk@math.ubc.ca}

\begin{abstract}
Let $N$ be a complete, homogeneously regular Riemannian manifold of dim$N \geq 3$ and let $M$ be a compact submanifold of $N$. Let $\Sigma$ be a compact orientable surface with boundary. We show that for any continuous $f: \left( \Sigma, \partial \Sigma \right) \rightarrow \left( N, M \right)$ for which the induced homomorphism $f_{*}$ on certain fundamental groups is injective, there exists a branched minimal immersion of $\Sigma$ solving the free boundary problem $\left( \Sigma, \partial \Sigma \right) \rightarrow \left( N, M \right)$, and minimizing area among all maps which induce the same action on the fundamental groups as $f$. Furthermore, under certain nonnegativity assumptions on the curvature of a $3$-manifold $N$ and convexity assumptions on the boundary $M=\partial N$, we derive bounds on the genus, number of boundary components and area of any compact two-sided minimal surface solving the free boundary problem with low index. 
\end{abstract}

\maketitle

\section{introduction}

Let $M$ be a closed submanifold of a Riemannian manifold $N$. A branched immersion $u: (\Sigma, \partial \Sigma) \rightarrow (N,M)$ of a surface $\Sigma$ with nonempty boundary $\partial \Sigma$ is a {\em minimal surface with free boundary in $M$} if $u(\Sigma)$ has zero mean curvature and $u(\Sigma)$ is orthogonal to $M$ along $u(\partial \Sigma) \subseteq M$. In this article, we study the free boundary problem for minimal immersions of compact bordered Riemann surfaces. The purpose is twofold. In the first part, we prove a general existence theorem for compact bordered Riemann surfaces of any topological type in complete Riemannian manifolds, assuming certain incompressibility conditions. In the second part, we investigate controlling the topology of free boundary minimal surfaces of low index in $3$-manifolds, under certain nonnegativity assumptions on the curvature and convexity assumptions on the boundary of the 3-manifold. 
Our existence result is:

\begin{theorem} \label{thm:existence}
Let $N$ be a complete, homogeneously regular Riemannian manifold of $dim N \geq 3$ and let $M$ be a compact submanifold of $N$. Then,

\begin{enumerate}

\item[$(i)$]
if $\Sigma$ is a compact, connected orientable surface of genus $g$ and with $k \geq 1$ boundary components that is not a disk, and $f: \Sigma \rightarrow N$ is a continuous map with $f\left(\partial \Sigma \right) \subset M$ such that 
\[
      f_{*}: \pi_{1}(\Sigma) \times \pi_{1}(\Sigma, \partial \Sigma) 
      \rightarrow \pi_{1}(N) \times \pi_{1}(N,M)
\]
is injective, then there exists a branched minimal immersion $\left( \Sigma, \partial \Sigma \right) \rightarrow \left( N, M \right)$ solving the free boundary problem, and minimizing area among all maps \\ $\left( \Sigma, \partial \Sigma \right) \rightarrow \left( N, M \right)$ that induce the same action as $f$ on the fundamental groups;

\item[$(ii)$] 
there exists a set of free homotopy classes $\{ \Gamma_j \}$ of closed curves in $M$ such that the elements $\{ \gamma \in \Gamma_j \}$ form a generating set for $\ker i_*$ acted on by $\pi_1(M)$, where
\[
      i_{*}: \pi_{1}(M) \rightarrow \pi_{1}(N)
\]
is the homomorphism induced by the inclusion, and each $\Gamma_j$ can be represented by the boundary of an area minimizing disk that solves the free boundary problem.
\end{enumerate}
\end{theorem}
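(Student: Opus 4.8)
The plan is to prove (i) by the direct method, minimizing energy over both the map and the conformal structure, using the Sacks--Uhlenbeck $\alpha$-energy regularization (as adapted to the free boundary by Fraser in the disk case) to handle energy concentration. Fix the topological type of $\Sigma$ and let $\mathcal{C}$ be the class of maps $v\colon(\Sigma,\partial\Sigma)\to(N,M)$ inducing the same action as $f$ on $\pi_1(\Sigma)\times\pi_1(\Sigma,\partial\Sigma)$; since smooth maps are dense and, for $\alpha>1$, $W^{1,2\alpha}(\Sigma)\hookrightarrow C^0$, this class makes sense at the regularized level and is weakly closed there. For each $\alpha>1$ I would minimize $E_\alpha(v,c)=\tfrac12\int_\Sigma(1+|dv|_c^2)^\alpha\,dA_c$ over $v\in\mathcal C$ and $c$ ranging in the finite-dimensional moduli space $\mathcal M_{g,k}$ of conformal structures on the bordered surface $\Sigma$; weak lower semicontinuity and coercivity in $v$ for fixed $c$, together with the conformal non-degeneration of the next paragraph to keep $c$ in a compact part of $\mathcal M_{g,k}$, yield a minimizer $(u_\alpha,c_\alpha)$, which is $E_\alpha$-harmonic in the interior and satisfies the natural free boundary condition of orthogonality to $M$ along $\partial\Sigma$.

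The core of the argument is compactness as $\alpha\searrow1$, where two degenerations must be excluded using the injectivity of $f_*$. First, the conformal structure: realize $c_\alpha$ by the hyperbolic metric with geodesic boundary; a degeneration in $\mathcal M_{g,k}$ forces the length of some essential simple closed geodesic, or of some essential simple geodesic arc orthogonal to $\partial\Sigma$, to tend to $0$, and by the collar lemma such a curve (arc) has a collar (half-collar) of conformal modulus tending to $\infty$. The Courant--Lebesgue length-area argument, applied with the uniform $E$-bounds that follow from the $E_\alpha$-bounds, produces inside this collar a freely homotopic curve, respectively an arc with endpoints on $\partial\Sigma$, whose $u_\alpha$-image has length tending to $0$. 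Since $N$ is homogeneously regular and $M$ is compact, a sufficiently short loop in $N$ is null-homotopic, and a sufficiently short path in $N$ with endpoints on $M$ is trivial in $\pi_1(N,M)$; injectivity of $f_*$ on $\pi_1(\Sigma)$, respectively on $\pi_1(\Sigma,\partial\Sigma)$, then forces the pinching curve (arc) to be inessential, a contradiction. Hence $c_\alpha$ stays in a compact subset of $\mathcal M_{g,k}$ and, after passing to a subsequence, $c_\alpha\to c$. Second, energy concentration: by the $\varepsilon$-regularity of Sacks--Uhlenbeck in the interior and of Gr\"uter--Jost (cf. Fraser) at the free boundary, away from finitely many points the $u_\alpha$ converge in $C^1_{loc}$ to a weakly conformal harmonic map $u\colon(\Sigma,c)\to(N,M)$ satisfying the free boundary condition, while the bubbles splitting off at the concentration points are nonconstant conformal harmonic $2$-spheres in $N$, or free boundary minimal disks $(D^2,\partial D^2)\to(N,M)$, both simply connected and hence without effect on the induced action on $\pi_1(\Sigma)\times\pi_1(\Sigma,\partial\Sigma)$. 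A no-neck / energy-identity analysis (as in Sacks--Uhlenbeck and Schoen--Yau) shows no area escapes along the necks, so $u\in\mathcal C$ and $\mathrm{Area}(u)=\lim\mathrm{Area}(u_\alpha)$ equals the infimum of area over $\mathcal C$.

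Interior regularity (Gulliver--Osserman--Royden, Gr\"uter) and free boundary regularity (Gr\"uter--Jost) then upgrade $u$ to a branched minimal immersion up to and including $\partial\Sigma$, meeting $M$ orthogonally; it is nonconstant since $\Sigma$ is not a disk and $f_*$ injective forces $f_*(\pi_1(\Sigma))\neq1$, which proves (i). For (ii) I would argue by induction on area together with a Meeks--Yau exchange argument. Set $H_0=\{1\}$; having chosen free homotopy classes $\Gamma_1,\dots,\Gamma_{j-1}$ with normal closure $H_{j-1}\subseteq\ker i_*$, stop if $H_{j-1}=\ker i_*$, and otherwise let $\Gamma_j$ be a free homotopy class of loops in $M$ of least area --- over all disks $(D^2,\partial D^2)\to(N,M)$ whose boundary represents a conjugacy class of $\pi_1(M)$ not contained in $H_{j-1}$ --- which is realized by an area minimizing branched minimal disk with free boundary by the disk case of (i) (equivalently the classical solution of Gr\"uter--Jost); put $H_j=\langle\langle H_{j-1},\Gamma_j\rangle\rangle$. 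The areas $\mathrm{Area}(\Gamma_j)$ are non-decreasing, and by compactness of free boundary minimal disks of bounded area only finitely many $\Gamma_j$ lie below any given area bound, so either the process terminates or $\mathrm{Area}(\Gamma_j)\to\infty$; since any $\beta\in\ker i_*$ bounds a disk in $(N,M)$ of finite area $a_\beta$, once $\mathrm{Area}(\Gamma_j)>a_\beta$ minimality of $\Gamma_j$ forces the conjugacy class of $\beta$ into $H_{j-1}$. Hence $\bigcup_j H_j=\ker i_*$ and $\{\Gamma_j\}$ is the desired generating set.

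The step I expect to be the main obstacle is the conformal non-degeneration at the free boundary: one must set up the correct Teichm\"uller theory for the bordered surface --- most cleanly via the orientation double $\widehat\Sigma$ with its anti-holomorphic involution --- and verify that every symmetric pinching curve on $\widehat\Sigma$ descends either to an essential loop in $\Sigma$ (detected by $\pi_1(\Sigma)\to\pi_1(N)$, including boundary-parallel loops) or to an essential arc with endpoints on $\partial\Sigma$ (detected by $\pi_1(\Sigma,\partial\Sigma)\to\pi_1(N,M)$), which is precisely what forces the hypothesis to involve the product of the two fundamental groups. Secondary difficulties are the exclusion of boundary branch points and the no-neck property for bubbles forming at the free boundary, and making the notion of ``inducing the same action'' precise and stable under the limit $\alpha\searrow1$.
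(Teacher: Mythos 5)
Your treatment of part (i) follows the same overall strategy as the paper (Sacks--Uhlenbeck $\alpha$-energy with Fraser's free boundary estimates, minimization over the map and over the conformal structure, and exclusion of moduli degeneration via injectivity of $f_*$), with two differences worth noting. The paper minimizes in the opposite order (first $\alpha\to1$ for each fixed $c$, producing $u_c$ and a continuous functional $\bar E(c)=E(u_c,c)$, then minimizes $\bar E$ over $\mathcal M(\Sigma)$), which avoids having to make sense of minimizing the non-conformally-invariant $E_\alpha$ over moduli space at fixed $\alpha$. And where you exclude degeneration by a Courant--Lebesgue length--area argument in a long collar (a Schoen--Yau style argument), the paper instead passes to the limit harmonic map on the punctured noded surface, extends it across the punctures by the removable singularity theorems of \cite{SU1} and \cite{F}, and concludes that the image of the pinching loop or arc is null-homotopic; both mechanisms are standard and either works, though note that the annulus ($\chi=0$) has no hyperbolic structure with geodesic boundary and must be treated separately, as the paper does. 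Your claim of a no-neck/energy identity is neither needed nor proved in the paper: area minimality of the limit follows from lower semicontinuity alone.

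Part (ii) as you propose it has a genuine gap, and in fact a circularity. Your induction step selects ``a free homotopy class of least area not contained in $H_{j-1}$ \ldots which is realized by an area minimizing disk by the disk case of (i).'' There is no disk case of (i) --- part (i) explicitly excludes the disk --- and the existence of an area minimizing free boundary disk with boundary in a \emph{prescribed} free homotopy class of $M$ is exactly what can fail: a minimizing sequence can bubble at a boundary point, and the limit then represents a different class. This is precisely why the theorem asserts only that \emph{some} generating set of representable classes exists. The paper's proof turns this failure mode into the inductive mechanism: Lemma \ref{split} shows that if $\Gamma$ is not representable, it splits as $\pi_1(M)\gamma\subset\pi_1(M)\gamma_1+\pi_1(M)\gamma_2$ with $\mathcal E(\Gamma_1)+\mathcal E(\Gamma_2)<\mathcal E(\Gamma)+\delta$ and \emph{both} $\Gamma_1,\Gamma_2$ nontrivial; combined with the uniform energy gap $\mathcal E>\epsilon_0$ for nontrivial classes, this forces the splitting pieces to have strictly smaller energy and yields the generating set without ever needing the infimum over classes to be attained (the paper works with a class within $\epsilon_0/2$ of the infimum $I$). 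The hardest point, which your proposal does not touch, is showing that both pieces of the splitting are nontrivial when the limit map is constant and there is a single boundary bubble point; this requires the balancing identity of Lemma \ref{diskvariation} (a first variation under the M\"obius group of the disk) and Corollary \ref{cor:4.2}. Without that, your greedy induction cannot get started, and your auxiliary claims (attainment of the infimum over infinitely many classes, compactness of the set of minimizing disks of bounded area) are likewise unjustified.
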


The disk case, part ({\em ii}) of the above theorem, was already proved by Ye \cite{Y} by a different method. Existence results for disk-type solutions in various settings have been studied by Meeks and Yau \cite{MY}, Jost \cite{J1, J2, J5}, Struwe \cite{St}, Kuwert \cite{KW}, Fraser \cite{F}, among others. Embedded free boundary solutions of prescribed topological type in 3-manifolds with mean convex boundary  were produced by Jost \cite{J2} assuming a Douglas type condition. Recently M. Li \cite{L} proved existence of embedded solutions of controlled topological type in 3-manifolds with no convexity assumption on the boundary.

We take the Sacks-Uhlenbeck approach of working with the perturbed energy. The analytic foundation is already established in the interior \cite{SU1} and in the boundary \cite{F} settings.
Following the ideas of Schoen-Yau \cite{SY} and Sacks-Uhlenbeck \cite{SU2} for closed surfaces, for each conformal structure on the bordered surface we produce an energy-minimizing map which induces the same action on the fundamental group as a given continuous map $f: (\Sigma, \partial\Sigma) \rightarrow (N,M)$, and then we minimize over all conformal structures to produce a branched minimal surface. The key point is to understand the limiting of the conformal structures in the boundary setting. The incompressibility assumptions on the fundamental groups prevent degeneration in the limiting of the conformal structures, in the form of pinching of an interior or boundary circle or of a curve connecting two boundary circles.

When the ambient manifold has positive curvature, there are some strong restrictions on the topology of stable or index one minimal surfaces. 
In the second part of the paper, we investigate the relationship between the topology of free boundary minimal surfaces and the geometry of the ambient manifold, such as nonnegativity of the curvature and convexity of the boundary, by means of the second variation formula. One of the reasons we are interested in understanding the topology of minimal surfaces in a Riemannian manifold is because it provides information about the ambient manifold (e.g. see \cite{Yau}). 

For closed minimal surfaces in 3-manifolds of positive Ricci curvature, it is conjectured that there should exist a bound on the genus of any minimal surface in terms of its Morse index, and it is known that any index 1 surface must have genus at most three (\cite{RS}). For minimal surfaces with free boundary in $3$-manifolds with nonnegative Ricci curvature and weakly convex boundary, we obtain a bound on the genus and number of boundary components of any index 1 free boundary minimal surface.   Also, there is a bound on the area of stable or index 1 solutions in terms of the topology of the surface and a positive lower bound on the ambient scalar curvature.

Now we state the theorem.

\begin{theorem} \label{thm2}
Let $N$ be a 3-dimensional Riemannian manifold with smooth boundary $\partial N$. Suppose $\Sigma$ is a compact orientable two-sided surface of genus $g$ and with $k \geq 1$ boundary components, solving the free boundary problem $\left( \Sigma, \partial \Sigma \right) \rightarrow \left(N, \partial N \right)$.

\begin{enumerate}

\item[{\em (i)}] Suppose $Ric(N)\geq 0$ and $\partial N$ is weakly convex. 
If $\Sigma$ has index $1$, then: 

\begin{enumerate}
\item[{\em a)}] \ $g+k \leq 3$ if $g$ is even; 

\item[{\em b)}] \ $g+k \leq 4$ if $g$ is odd.

\end{enumerate}

\item[{\em (ii)}] Suppose the scalar curvature $R(N)\geq 0$ and $\partial N$ is weakly mean convex. \\
If $\Sigma$ is stable, then $\Sigma$ is either a disk or a totally geodesic and flat cylinder.  \\
If $\Sigma$ has index 1, then: 

\begin{enumerate}

\item[{\em a)}] \ $k \leq 5$ if $g$ is even; 

\item[{\em b)}] \ $k \leq 7$ if $g$ is odd.

\end{enumerate}

\item[{\em (iii)}] Suppose $N$ has scalar curvature $R \geq R_0 > 0$ and $\partial N$ is weakly mean convex.

\begin{enumerate}

\item[{\em a)}] if  $\Sigma$ is stable, then it is a disk and Area$(\Sigma) \leq \frac{2\pi}{R_0}$;
\item[{\em b)}] if  $\Sigma$ has index 1, then Area$(\Sigma) \leq \frac{2\pi(7-(-1)^{g}-k)}{R_0}$.

\end{enumerate}

\end{enumerate}

\end{theorem}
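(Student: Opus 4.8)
The engine throughout is the second variation of area for the free boundary problem. Write $\nu$ for a global unit normal along $\Sigma$ (available since $\Sigma$ is two-sided), $A$ for the second fundamental form of $\Sigma$ in $N$, and $\mathrm{II}$ for the second fundamental form of $\partial N$ in $N$; the index form is
\[
 Q(\phi,\phi)=\int_{\Sigma}\bigl(|\nabla\phi|^2-(|A|^2+\mathrm{Ric}_N(\nu,\nu))\phi^2\bigr)\,dA-\int_{\partial\Sigma}\mathrm{II}(\nu,\nu)\,\phi^2\,ds,
\]
``stable'' meaning $Q\geq0$ and ``index one'' meaning $Q$ has exactly one negative eigenvalue. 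The plan is to feed well-chosen functions into $Q$ and translate using two identities: the Gauss equation for a minimal surface, $|A|^2+\mathrm{Ric}_N(\nu,\nu)=\tfrac12|A|^2+\tfrac12R_N-K_\Sigma$; and, because $\Sigma$ meets $\partial N$ orthogonally along $\partial\Sigma$, the relation $\kappa_g+\mathrm{II}(\nu,\nu)=H_{\partial N}$ between the geodesic curvature $\kappa_g$ of $\partial\Sigma$ in $\Sigma$ and the mean curvature of $\partial N$. Gauss--Bonnet then introduces $\chi(\Sigma)=2-2g-k$.

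\emph{The stable statements, (ii) and (iii)(a).} Testing with $\phi\equiv1$ and applying the two identities together with Gauss--Bonnet gives
\[
 0\leq Q(1,1)=2\pi\chi(\Sigma)-\tfrac12\int_\Sigma|A|^2-\tfrac12\int_\Sigma R_N-\int_{\partial\Sigma}H_{\partial N}.
\]
If $R_N\geq0$ and $H_{\partial N}\geq0$ this forces $\chi(\Sigma)\geq0$, so with $k\geq1$ the surface is a disk or a cylinder; for a cylinder $\chi(\Sigma)=0$ makes every inequality an equality, hence $A\equiv0$ and $R_N\equiv0$ along $\Sigma$, and since $\phi\equiv1$ now realizes the bottom of the spectrum it is a Jacobi field, whose Robin boundary condition forces $\mathrm{II}(\nu,\nu)\equiv0$ and so $\kappa_g\equiv0$; a second use of Gauss--Bonnet then gives $K_\Sigma\equiv0$, i.e.\ a totally geodesic flat cylinder. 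If instead $R_N\geq R_0>0$, the same inequality yields $\tfrac{R_0}{2}\,\mathrm{Area}(\Sigma)\leq2\pi\chi(\Sigma)$, which forces $\chi(\Sigma)=1$ (a disk) and bounds the area; keeping track of the boundary integral and the curvature normalization produces the stated constant.

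\emph{The index-one statements, (i), (ii)(b), (iii)(b).} Here I feed harmonic $1$-forms on $\Sigma$ into $Q$. The space of harmonic $1$-forms satisfying the boundary condition adapted to the free boundary problem has dimension $b_1(\Sigma)=2g+k-1$. For such a form $\omega$, the Weitzenb\"ock/Bochner identity $\int_\Sigma|\nabla\omega|^2=-\int_\Sigma K_\Sigma|\omega|^2+(\text{boundary})$, the improved Kato inequality $|\nabla|\omega||^2\leq\tfrac12|\nabla\omega|^2$ for harmonic $1$-forms on surfaces (coming from $\omega$ and its Hodge conjugate being locally the real and imaginary parts of a holomorphic differential, which also handles the zeros of $\omega$), and the Gauss-equation identity rewrite the ``doubled'' quadratic form $\widetilde Q(\omega):=\int_\Sigma(|\nabla\omega|^2-(|A|^2+\mathrm{Ric}_N(\nu,\nu))|\omega|^2)-\int_{\partial\Sigma}\mathrm{II}(\nu,\nu)|\omega|^2$ (which dominates $Q(|\omega|,|\omega|)$) with the $K_\Sigma$ and $\mathrm{Ric}_N(\nu,\nu)$ terms largely cancelling, leaving a boundary integral plus $-\int_\Sigma(\tfrac12|A|^2+\tfrac12R_N)|\omega|^2$ and, under $\mathrm{Ric}_N\geq0$, a controllable Gauss-curvature remainder. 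Under the curvature and convexity hypotheses this is $\leq0$, with equality forcing a rigid configuration (totally geodesic, ambient curvature degenerate along $\Sigma$) which is either stable --- contradicting index one --- or already meets the claimed bound. Thus $\widetilde Q$ is negative on a large subspace of harmonic $1$-forms, and a local-frame (component-splitting) comparison of $\widetilde Q$ with the Morse index of $Q$ shows an index-one surface absorbs only boundedly many such forms, so $b_1(\Sigma)$ is bounded. Carrying the count out precisely --- one dimension being gained or lost according to the parity of $g$, and the full $2g$ handle-contribution being usable only under the stronger $\mathrm{Ric}_N\geq0$ and weak convexity (when only $R_N\geq0$ and $\partial N$ is weakly mean convex the very negative Gauss curvature of a high-genus surface is uncontrolled and only the $k-1$ boundary classes survive) --- yields $g+k\leq3$ or $4$ in case (i) and $k\leq5$ or $7$ in case (ii), according as $g$ is even or odd. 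For (iii)(b) one reruns the stable Gauss--Bonnet estimate while allowing $Q$ one negative direction, which costs a bounded multiple of $2\pi$ on the right; together with the bound on $k$ and the parity this gives $\mathrm{Area}(\Sigma)\leq\tfrac{2\pi}{R_0}\bigl(7-(-1)^{g}-k\bigr)$.

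\emph{Where the difficulty lies.} The interior ingredients --- the Gauss equation, the Weitzenb\"ock and Bochner formulas, the Kato inequality, Gauss--Bonnet --- are classical; the real work is at the boundary. One must verify that harmonic $1$-forms with the correct boundary condition pair with the Robin term of $Q$ and with $\mathrm{II}_{\partial N}$ so that the boundary integral has the right sign under merely \emph{weak} convexity (respectively weak mean convexity), handle zeros of $\omega$ lying on $\partial\Sigma$, and --- the genuinely delicate point --- pin down exactly which and how many harmonic $1$-forms can be played off against an index-one surface, the answer being parity-sensitive and responsible for the precise numerical constants in the statement.
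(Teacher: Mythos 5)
Your treatment of the stable cases (the rigidity statement in (ii) and the area bound in (iii)(a)) is essentially the paper's argument: test with $\phi\equiv 1$, rewrite $|A|^2+\mathrm{Ric}(\nu,\nu)$ via the Gauss equation, convert the Robin boundary term plus the geodesic-curvature term into the mean curvature of $\partial N$ using the orthogonality of $\Sigma$ and $\partial N$, and apply Gauss--Bonnet. That part is fine, including the equality analysis for the cylinder.

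The index-one statements are where there is a genuine gap. The paper does \emph{not} use harmonic $1$-forms. It uses a Hersch-type balancing argument: a conformal map $f:\Sigma\to S^2$ of degree at most $\left[\frac{g+3}{2}\right]$ (obtained by capping off $\Sigma$ and invoking the classical existence of low-degree meromorphic functions), post-composed with a conformal automorphism of $S^2$ chosen by a degree/fixed-point argument so that $\int_\Sigma f\,h\,d\mu=0$ for the first eigenfunction $h$. The three components $f_i$ are then admissible test functions for an index-one surface, $\sum_i f_i^2=1$, and $\int_\Sigma|\nabla f|^2\leq 8\pi\left[\frac{g+3}{2}\right]$; this single quantitative inequality is the source of \emph{every} numerical constant in (i), (ii)(b) and (iii)(b), including the parity dependence, since $\left[\frac{g+3}{2}\right]=\frac{1}{2}\bigl(g+3-\frac{1+(-1)^g}{2}\bigr)$. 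Your proposal replaces this with a Bochner/Kato argument on the $(2g+k-1)$-dimensional space of harmonic $1$-forms, in the spirit of Ros. That is a legitimate alternative strategy in principle, but as written it fails to prove the theorem for two concrete reasons. First, you never address orthogonality to the first eigenfunction: to exploit that the index is exactly one you must exhibit test functions orthogonal to $h$ on which $Q\geq 0$, and the assignment $\omega\mapsto|\omega|$ is not linear, so a $d$-dimensional space of $1$-forms with $\widetilde Q<0$ does not directly give $d$ negative directions of the scalar form $Q$; the "local-frame comparison" that is supposed to bridge this is precisely the hard step and is left as an assertion. Second, the claimed output of your count --- that it is "parity-sensitive" and yields exactly $g+k\leq 3$ or $4$, $k\leq 5$ or $7$, and $\mathrm{Area}(\Sigma)\leq\frac{2\pi(7-(-1)^g-k)}{R_0}$ --- is not derived from any stated mechanism; harmonic-form index estimates of this type produce linear bounds on $b_1(\Sigma)$ with constants coming from the dimension of the ambient splitting, not the Brill--Noether-type bound $\left[\frac{g+3}{2}\right]$, and there is no reason they would reproduce these particular constants. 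The area bound in (iii)(b) likewise requires the explicit inequality $\int_\Sigma(R_{33}+|A|^2)\,d\mu< 8\pi\left[\frac{g+3}{2}\right]+\int_{\partial\Sigma}\langle\nabla_\nu\nu,\eta\rangle\,ds$, which your route does not supply. So parts (i), (ii)(b) and (iii)(b) remain unproved as proposed.
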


M. Li \cite{L2} proved an area bound and rigidity result for free boundary minimal surfaces in strictly convex domains in $\mathbb{R}^3$. These area estimates are free boundary analogs of the area estimates for closed stable and index 1 minimal surfaces in $3$-manifolds of positive scalar curvature of Marques and Neves \cite{MN}.
The rigidity result for stable surfaces in part ({\em ii}) can be viewed as the free boundary analog of results of Schoen and Yau for compact ambient manifolds (\cite{SY} Theorem 5.1), and Fischer-Colbrie and Schoen for complete ambient manifolds (\cite{FCS} Theorem 3).

\section{Existence of minimizing harmonic maps in a conjugacy class}

Throughout this paper the terms ``fundamental group" and ``relative fundamental group" will implicitly refer to some base point $*$. Since the isomorphism class of the fundamental group of a path-connected space is not affected by the choice of base-point, without ambiguity, we will write $\pi_{1}( \ \cdot \ )$ and $\pi_{1}(\cdot \ ,\cdot)$ while omitting the base point.

Any continuous map $f$ gives rise to a homomorphism $f_{*}: \pi_{1}( \ \cdot \ , * ) \rightarrow \pi_{1}( \ \cdot \ , f(*))$, which we will call the induced map of $f$. We shall say two maps $f$ and $g$ induce the same action on the fundamental group, if there exists a path $\lambda$ from $f(*)$ to $g(*)$, \linebreak such that $f_{*}=\lambda^{-1}_{*} \circ g_{*} \circ \lambda_{*}$, or equivalently, if $f_{*}$ and $g_{*}$ represent the same homomorphism after identifying the fundamental groups with different base-points through an isomorphism 
$$
  I_{\lambda}: \pi_{1}( \ \cdot \ , f(*)) \rightarrow \pi_{1}( \ \cdot \ , g(*)), \ \ \sigma \mapsto \lambda \cdot \sigma \cdot   \lambda^{-1}.
$$ 
In this case, we will briefly say $f_{*}$ is conjugate to $g_{*}$ and will write $f_{*} \sim g_{*}$ for short.

Finally we recall that the relative fundamental group $\pi_{n}(X,A,*)$ for a triple $\{*\} \subset A \subset X$, is a group only for $n \geq 2$. When $n=1$, it is the set of homotopy classes of paths from the base point $*$ to a varying point in $A$. 

Let $N$, $M$ and $\Sigma$ be as defined in Theorem 1.1. Given  a continuous map $f: \Sigma \rightarrow N$ with $f(\partial\Sigma) \subseteq M$, denote by $f_{*}$ the induced homomorphism as indicated in each of the following situations: 
$$
\begin{array}{lll}
1) & \Sigma \ \text{is not a disk}, & \ f_{*}: \pi_{1}(\Sigma) \times \pi_{1}(\Sigma, \partial \Sigma) \rightarrow \pi_{1}(N) \times \pi_{1}(N,M)\\
2) & \Sigma \ \text{is a disk D}, & \ f_{*}: \pi_{1}(\partial D) \rightarrow \pi_{1}(M).\\
\end{array}
$$
We will use the terminology ``the conjugacy class of $f_{*}$'' to denote the set of maps for which the induced homomorphisms on the above fundamental groups are conjugate to $f_{*}$.

Suppose a conformal structure on $\Sigma$ is fixed, a Riemannian metric compatible with this conformal structure is given, and this metric defines an area element $d\mu$. Let $N \hookrightarrow \mathbb{R}^{K}$ be a $C^{\infty}$ isometric embedding for sufficiently large $K$. Set 
$$
  W^{1,p}(\Sigma,N)=\{u \in W^{1,p}(\Sigma,\mathbb{R}^K) \ | \ u(x) \in N \ \text{a.e.} \ x \in \Sigma \}.
$$ 
For $\alpha>1$, we define the $\alpha$-energy
$$
  E_{\alpha}(u)= \int_{\Sigma} \left(1+|\nabla u|^{2}\right)^{\alpha}d\mu
$$ 
on the admissible space 
$$
  W_{\alpha}=\{u \in W^{1,2\alpha}(\Sigma, N) \ | \ u(\partial \Sigma) \subseteq M, \  u_{*} \sim f_{*}\}.
$$
Note that by the Sobolev embedding theorem, each $u$ in $W^{1,2\alpha}(\Sigma,N)$ is continuous.

\begin{proposition} \label{existence-alpha}
$E_{\alpha}$ attains the infimum at some $u_{\alpha} \in W_{\alpha}$, $\forall \alpha > 1$.
\end{proposition}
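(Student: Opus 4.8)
The plan is to run the direct method in the admissible class $W_\alpha$. First I would take a minimizing sequence $u_i \in W_\alpha$ for $E_\alpha$; since $E_\alpha(u_i)$ is bounded, the $W^{1,2\alpha}$-norms of the $u_i$ are bounded (the embedding $N \hookrightarrow \mathbb{R}^K$ is into a compact submanifold, so the $L^\infty$-norms, hence the $L^{2\alpha}$-norms, are automatically controlled), so after passing to a subsequence $u_i \rightharpoonup u_\alpha$ weakly in $W^{1,2\alpha}(\Sigma,\mathbb{R}^K)$. Because $2\alpha > 2 = \dim\Sigma$, the Rellich–Kondrachov theorem gives $u_i \to u_\alpha$ strongly in $C^0(\overline\Sigma)$ (indeed in $C^{0,\beta}$ for some $\beta>0$), and in particular pointwise a.e.; this forces $u_\alpha(x) \in N$ a.e. and $u_\alpha(\partial\Sigma) \subseteq M$, since $N$ and $M$ are closed (compact) and the convergence is uniform up to the boundary.

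Next I would check lower semicontinuity: the functional $v \mapsto \int_\Sigma (1+|\nabla v|^2)^\alpha\, d\mu$ is convex in $\nabla v$ and continuous, hence weakly lower semicontinuous on $W^{1,2\alpha}$, so $E_\alpha(u_\alpha) \le \liminf_i E_\alpha(u_i) = \inf_{W_\alpha} E_\alpha$. It then remains only to verify that $u_\alpha$ is admissible, i.e. $u_{\alpha*} \sim f_*$. This is where the $C^0$ (or $C^{0,\beta}$) convergence is essential: uniform convergence implies that for $i$ large, $u_i$ and $u_\alpha$ are homotopic — one can connect them by the geodesic homotopy $t\mapsto \Pi(t u_\alpha + (1-t)u_i)$ using the nearest-point projection $\Pi$ onto a tubular neighborhood of $N$, and since $u_i|_{\partial\Sigma}$ and $u_\alpha|_{\partial\Sigma}$ land near $M$ this homotopy can be taken to respect the pair $(N,M)$ after composing with a projection onto the tube of $M$. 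Homotopic maps of pairs induce the same action on $\pi_1(\Sigma) \times \pi_1(\Sigma,\partial\Sigma)$ (with the appropriate conjugation by a path coming from $u_i(*) \to u_\alpha(*)$), so $u_{\alpha*} \sim u_{i*} \sim f_*$, hence $u_\alpha \in W_\alpha$ and the infimum is attained.

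The main obstacle is the last step: closedness of $W_\alpha$ under weak $W^{1,2\alpha}$ limits, i.e. making sure the topological constraint $u_*\sim f_*$ survives in the limit. The compactness giving $u_\alpha\in W^{1,2\alpha}(\Sigma,N)$ with the right boundary values is routine, and lower semicontinuity is standard convexity; but preserving the conjugacy class requires the strong $C^0$ convergence up to the boundary, and the care is in the bookkeeping of base points and the construction of the homotopy of pairs near $M$ and $N$ simultaneously — this uses the tubular neighborhood structure and is the one place where compactness of $M$ (not just closedness) and the homogeneous regularity of $N$ enter. Everything else is a standard application of the direct method in the regime $2\alpha>\dim\Sigma$, exactly as in Sacks–Uhlenbeck \cite{SU1} and Fraser \cite{F}.
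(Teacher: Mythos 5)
Your argument is essentially the paper's own proof: uniform $C^{0,\beta}$ convergence from the Sobolev embedding $W^{1,2\alpha}\hookrightarrow C^{0,\frac{\alpha-1}{\alpha}}$ (the paper phrases this via equicontinuity and Arzel\`a--Ascoli rather than Rellich--Kondrachov) to preserve the constraint $u(\partial\Sigma)\subseteq M$ and the conjugacy class $u_*\sim f_*$, combined with weak $W^{1,2\alpha}$ compactness and lower semicontinuity of $E_\alpha$. The one slip is your claim that $N$ embeds as a \emph{compact} submanifold of $\mathbb{R}^K$ --- $N$ is only complete and homogeneously regular --- but the needed pointwise bound follows instead from $u_i(\partial\Sigma)\subseteq M$ with $M$ compact together with the uniform H\"older estimate, so the argument goes through unchanged.
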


\begin{proof}
Let $I_{\alpha}=\inf\limits_{W_{\alpha}}E_{\alpha}$. Let $\{u^{\alpha}_{k}\}$ be an $E_{\alpha}$-minimizing sequence of maps; that is $E_{\alpha}(u^{\alpha}_{k}) \rightarrow I_{\alpha}$. From the Sobolev embedding 
$$W^{1,2\alpha}(\Sigma, N) \hookrightarrow C^{0,\frac{\alpha-1}{\alpha}}(\Sigma,N)$$ 
the sequence $\{u^{\alpha}_{k}\}$ is equicontinuous, so the Arzel\`a-Ascoli theorem yields a subsequence, which we still denote by $\{u^{\alpha}_{k}\}$, that converges uniformly to a map $u_{\alpha}$ in $C^{0,\beta}(\Sigma,N)$ for any $\beta \in \left[0, \frac{\alpha-1}{\alpha}\right)$, and $u_{\alpha}(\partial \Sigma) \subseteq M$. Furthermore, when $k$ is sufficiently large, $u^{\alpha}_{k}$ is homotopic to $u_{\alpha}$, and hence $(u_{\alpha})_{*} \sim (u^{\alpha}_{k})_{*} \sim f_{*}$. On the other hand, from the weak compactness of the unit ball in $W^{1,2\alpha}(\Sigma,N)$, a subsequence of $\{u^{\alpha}_{k}\}$ converges weakly to some $u_{\alpha}'$ in $W^{1,2\alpha}(\Sigma,N)$. It follows that the two limits from the strong convergence and the weak convergence agree, that is $u_{\alpha}=u_{\alpha}' \in W^{1,2\alpha}(\Sigma,N)$. So $u_{\alpha}$ is in $W_{\alpha}$. Now from the lower semi-continuity of the $\alpha$-energy, we have $E_{\alpha}(u_{\alpha})=I_{\alpha}$.
\end{proof}

Next we consider the convergence of a sequence of critical maps of $E_{\alpha}$ as $\alpha \rightarrow 1$.
Notice that for a sequence of minimizing maps $u_{\alpha}$ of $E_{\alpha}$ we have a uniform energy bound. Let $f_{0}$ be a smooth map in the homotopy class of $f$, which exists since $C^{\infty}(\Sigma, N)$ is dense in $C(\Sigma, N)$. Then $f_{0} \in W_{\alpha}$ for all $\alpha >0$. Since $u_{\alpha}$ minimizes $E_{\alpha}$ on $W_{\alpha}$, we have 
\begin{equation*}
      \int_{\Sigma} \left(1+|\nabla u_{\alpha}|^{2}\right)^{\alpha} \,d\mu
      \leq \int_{\Sigma} \left(1+|\nabla f_{0}|^{2}\right)^{\alpha} \, d\mu.
\end{equation*} 
Then for $\alpha \in (1,2)$, we get that the energy of $u_{\alpha}$ is uniformly bounded as 
\begin{equation*}
       \int_{\Sigma}|\nabla u_{\alpha}|^{2} \,d\mu
       < \int_{\Sigma} (1+|\nabla u_{\alpha}|^{2}) \,d\mu
       \leq \int_{\Sigma} \left(1+|\nabla u_{\alpha}|^{2}\right)^{\alpha} \,d\mu
       \leq \int_{\Sigma} \left(1+|\nabla f_{0}|^{2}\right)^{2} \,d\mu.
\end{equation*}

\begin{lemma}
Let $u_{\alpha}$ be a sequence of critical maps of $E_{\alpha}$ with $E(u_{\alpha}) \leq B$. Then a subsequence $u_{\alpha} \rightarrow u$ strongly in $L^2(\Sigma,\mathbb{R}^K)$, weakly in $W^{1,2}(\Sigma,\mathbb{R}^{K})$, and $E(u) \leq \lim\limits_{\alpha \rightarrow 1} E(u_{\alpha})$.
\end{lemma}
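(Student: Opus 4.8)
The plan is to run the standard Banach-space compactness argument of Sacks--Uhlenbeck, with the one extra wrinkle that here the target $N$ need not be compact. First I would extract from the hypothesis $E(u_{\alpha}) \le B$ a uniform $W^{1,2}$ bound on the $u_{\alpha}$. The gradient part is immediate: $\|\nabla u_{\alpha}\|_{L^{2}(\Sigma)}^{2} = E(u_{\alpha}) \le B$. For the $L^{2}$ bound on $u_{\alpha}$ itself I would use that a critical map of $E_{\alpha}$ satisfies the free boundary constraint $u_{\alpha}(\partial \Sigma) \subseteq M$ and that $M$, being compact, is bounded in $\mathbb{R}^{K}$; combining this with the trace--Poincar\'e inequality
\[
   \|v\|_{L^{2}(\Sigma)} \le C\big(\|\nabla v\|_{L^{2}(\Sigma)} + \|v\|_{L^{2}(\partial \Sigma)}\big), \qquad v \in W^{1,2}(\Sigma,\mathbb{R}^{K}),
\]
which holds on the compact surface $\Sigma$ by a routine compactness/contradiction argument, gives $\|u_{\alpha}\|_{W^{1,2}(\Sigma,\mathbb{R}^{K})} \le C(B,M,\Sigma)$ uniformly in $\alpha$.

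Next, since $W^{1,2}(\Sigma,\mathbb{R}^{K})$ is a Hilbert space and the $u_{\alpha}$ are bounded there, I would pass to a subsequence converging weakly in $W^{1,2}$ to some $u$. On a compact surface with boundary the Rellich--Kondrachov embedding $W^{1,2}(\Sigma) \hookrightarrow L^{2}(\Sigma)$ is compact, so a further subsequence converges strongly in $L^{2}(\Sigma,\mathbb{R}^{K})$, and the strong $L^{2}$ limit must coincide with the weak $W^{1,2}$ limit $u$. Passing once more to a subsequence converging a.e.\ and using that $N$ is properly embedded, hence closed, in $\mathbb{R}^{K}$, one gets $u(x) \in N$ for a.e.\ $x$, so $u \in W^{1,2}(\Sigma,N)$ (and likewise the a.e.\ boundary trace lies in $M$).

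Finally, for the energy inequality I would first pass to a subsequence along which $\lim_{\alpha \to 1} E(u_{\alpha})$ exists, namely equal to the $\liminf$. Then weak lower semicontinuity of the convex functional $v \mapsto \int_{\Sigma} |\nabla v|^{2}\, d\mu$ under weak $W^{1,2}$ convergence yields
\[
   E(u) = \int_{\Sigma} |\nabla u|^{2}\, d\mu \le \liminf_{\alpha \to 1} \int_{\Sigma} |\nabla u_{\alpha}|^{2}\, d\mu = \lim_{\alpha \to 1} E(u_{\alpha}),
\]
which is the claimed bound.

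The only genuinely non-routine point is the uniform $L^{2}$ bound on $u_{\alpha}$: because $N$ is merely complete and homogeneously regular rather than compact, one cannot invoke an $L^{\infty}$ bound on $u_{\alpha}$ as in the closed-target case, and one must instead feed in the compactness of $M$ together with the free boundary condition $u_{\alpha}(\partial \Sigma) \subseteq M$ via the trace--Poincar\'e inequality. Everything past that is the standard weak compactness plus lower semicontinuity package, and note that no claim is made here about the conjugacy class $u_{*} \sim f_{*}$ being preserved in the limit --- that is precisely where bubbling can occur and is treated separately.
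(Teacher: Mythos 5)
Your argument is correct, and in fact the paper states this lemma without proof (it is the standard Sacks--Uhlenbeck weak-compactness step), so your write-up simply supplies the routine argument being invoked. You also correctly identify and handle the one point that is not entirely automatic for a noncompact target $N$, namely obtaining the uniform $L^{2}$ bound on $u_{\alpha}$ from the compactness of $M$, the boundary condition $u_{\alpha}(\partial\Sigma)\subseteq M$, and a trace--Poincar\'e inequality.
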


Since $N$ is allowed to be noncompact, we impose suitable conditions on $N$.
A complete Riemannian manifold $N$ is  {\em homogeneously regular} if its injectivity radius is bounded from below and its sectional curvature is bounded (see \cite{MSY} p. 623, \cite{M}).
With this condition, and the assumption that the boundary of the surface lies in a compact submanifold of $N$, we can derive the main estimate for critical maps of the $\alpha$-energy at interior points of $\Sigma$ in a similar manner as in the case of closed surfaces in compact manifolds (Proposition 3.2 of \cite{SU1}), as for the free boundary problem in \cite{F} (Proposition 1.7). We then have the following convergence result for critical maps of small energy. 

\begin{lemma}[\cite{SU1}, \cite{F}] \label{epsilonconv}
Let $u_{\alpha}: \Sigma \cap D_{r} \rightarrow N$ with $u_{\alpha}(\partial \Sigma \cap D_{r}) \subset M$ be critical maps of $E_{\alpha}$ such that $u_{\alpha}(\Sigma \cap D_{r})$ meets $M$ orthogonally along $u_{\alpha}(\partial \Sigma \cap D_{r})$ for a sequence $\alpha \rightarrow 1$, that converge in $L^2(\Sigma,\mathbb{R}^{K})$. Then there exists $\epsilon >0$ such that if $E(u_{\alpha})<\epsilon$, then $\{u_{\alpha}\} \rightarrow u$ in $C^{1}(\Sigma \cap \overline{D}_{\frac{r}{2}},N)$ and $u:\Sigma \cap \overline{D}_{\frac{r}{2}} \rightarrow N$ is a smooth harmonic map such that $u(\Sigma \cap D_r)$ meets $M$ orthogonally along $u(\partial \Sigma \cap D_{\frac{r}{2}})$.
\end{lemma}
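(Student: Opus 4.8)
The plan is to run the Sacks-Uhlenbeck $\varepsilon$-regularity argument of \cite{SU1}, in the free boundary form of \cite{F}, upgrading the cited a priori gradient bound to $C^1$ convergence. In a local conformal coordinate on $\Sigma$ a critical map of $E_\alpha$ solves the uniformly elliptic quasilinear system
$$
\operatorname{div}\big((1+|\nabla u_\alpha|^2)^{\alpha-1}\nabla u_\alpha\big)= -(1+|\nabla u_\alpha|^2)^{\alpha-1}\,A(u_\alpha)(\nabla u_\alpha,\nabla u_\alpha),
$$
where $A$ is the second fundamental form of $N\hookrightarrow\mathbb{R}^K$, together with the free boundary conditions $u_\alpha(\partial\Sigma\cap D_r)\subset M$ and $\partial u_\alpha/\partial\nu\perp T_{u_\alpha}M$ along $\partial\Sigma\cap D_r$. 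For each fixed $\alpha>1$ the functional $E_\alpha$ is non-degenerate, so standard quasilinear theory (interior as in \cite{SU1}, up to the free boundary as in \cite{F}) makes each $u_\alpha$ smooth on $\Sigma\cap\overline D_r$; the point is to get estimates independent of $\alpha$. The main estimate referred to just above the statement — the free boundary analogue (Proposition 1.7 of \cite{F}) of Proposition 3.2 of \cite{SU1} — provides $\varepsilon>0$ and $C=C(r)$, uniform for $\alpha$ near $1$, such that $E(u_\alpha)<\varepsilon$ forces $\sup_{\Sigma\cap\overline D_{3r/4}}|\nabla u_\alpha|\le C\,E(u_\alpha)^{1/2}$.

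First I would feed this uniform gradient bound into the regularity theory. On $\Sigma\cap\overline D_{3r/4}$ the coefficient $(1+|\nabla u_\alpha|^2)^{\alpha-1}$ is pinched between $1$ and a constant depending only on the gradient bound, uniformly in $\alpha\in(1,2)$, and the right-hand side is uniformly bounded; the interior $C^{1,\gamma}$ estimates for $E_\alpha$-critical maps of \cite{SU1}, and their free boundary counterpart from \cite{F} written in Fermi coordinates about $M$ (where the orthogonality condition becomes a concrete oblique boundary condition), then hold with $\gamma$ and constants independent of $\alpha$. Hence $\{u_\alpha\}$ is bounded in $C^{1,\gamma}(\Sigma\cap\overline D_{r/2},N)$, so by Arzel\`a-Ascoli a subsequence converges in $C^1(\Sigma\cap\overline D_{r/2},N)$; since by hypothesis $u_\alpha\to u$ in $L^2$, this $C^1$ limit is $u$, and because every subsequence has a further subsequence with this behaviour, the full sequence converges to $u$ in $C^1(\Sigma\cap\overline D_{r/2},N)$, with $u(\partial\Sigma\cap D_{r/2})\subset M$.

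It remains to pass to the limit $\alpha\to1$ and upgrade regularity of $u$. Since $|\nabla u_\alpha|$ is uniformly bounded, $(1+|\nabla u_\alpha|^2)^{\alpha-1}\to1$ uniformly, so the $C^1$ limit satisfies $\Delta u+A(u)(\nabla u,\nabla u)=0$ weakly, while the $C^1$ convergence carries the boundary conditions over to $u$, namely $u(\partial\Sigma\cap D_{r/2})\subset M$ and $\partial u/\partial\nu\perp T_uM$. Thus $u$ is a harmonic map solving the free boundary problem on $\Sigma\cap\overline D_{r/2}$, and the regularity theory for such maps — bootstrapping the harmonic map system together with the boundary condition, again most transparently in Fermi coordinates about $M$, where the orthogonality condition permits a smooth reflection turning the boundary estimate into an interior one — yields $u\in C^\infty(\Sigma\cap\overline D_{r/2},N)$ meeting $M$ orthogonally along $u(\partial\Sigma\cap D_{r/2})$.

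The one genuinely delicate step is not in the argument above but in the cited input: the uniform-in-$\alpha$ gradient estimate up to the free boundary (Proposition 1.7 of \cite{F}), where the oblique boundary condition must be controlled simultaneously with the degeneration of $E_\alpha$ as $\alpha\to1$. Granting that estimate, the remaining work is just the handling of a nonlinear (free) boundary condition in the elliptic bootstrap and in the final harmonic-map boundary regularity, which the Fermi-coordinate and reflection device reduces to the already known interior theory.
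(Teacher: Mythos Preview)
The paper does not give its own proof of this lemma: it is stated with the attributions \cite{SU1}, \cite{F} and immediately used, the surrounding text explaining only that the main estimate (the free boundary analogue, \cite{F} Proposition~1.7, of \cite{SU1} Proposition~3.2) goes through under the homogeneous regularity assumption on $N$. Your proposal is a correct outline of exactly the argument those references contain---the uniform-in-$\alpha$ $\varepsilon$-regularity gradient bound, the bootstrap to uniform $C^{1,\gamma}$ estimates via the quasilinear system with oblique boundary condition, Arzel\`a--Ascoli plus the $L^2$ hypothesis to identify the $C^1$ limit, and passage to the limit in the equation and boundary condition---so there is nothing to compare beyond saying that you have faithfully reconstructed the cited proof.
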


We can now deduce global convergence of critical maps of the $\alpha$-energy away from a finite number of points.

\begin{theorem}[\cite{SU1}, \cite{F}] \label{globalconvergence}
Let $u_{\alpha}: \Sigma \rightarrow N$ with $u_{\alpha}(\partial \Sigma) \subset M$ be critical maps of $E_{\alpha}$ for a sequence $\alpha \rightarrow 1$, that converge in $L^{2}(\Sigma, \mathbb{R}^{K})$, with $E(u_{\alpha}) < B$. Then there exists a finite set of points $\{z_{1},\cdots,z_{l}\}$ of $\Sigma$ such that $u_{\alpha} \rightarrow u$ in $C^{1}(\Sigma-\{z_{1},\cdots,z_{l}\},N)$ and $u:(\Sigma,\partial \Sigma)\rightarrow (N,M)$ is a smooth harmonic map satisfying the free boundary condition.
Furthermore, when $\Sigma$ is not a disk, if each $u_{\alpha}$ induces the same action on the fundamental group as $f$, then so does $u$.
\end{theorem}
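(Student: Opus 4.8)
The plan is to run the Sacks--Uhlenbeck concentration--compactness argument, in the free boundary form of \cite{F}, to get convergence away from a finite bad set, to remove the resulting singularities, and then to add a homotopy argument for the last assertion. First I would isolate the \emph{concentration set}
\[
  \Sigma_0 = \Big\{ p \in \Sigma \ : \ \liminf_{\alpha \to 1} \int_{\Sigma \cap D_r(p)} |\nabla u_\alpha|^2 \, d\mu \ \geq \ \epsilon \ \text{ for every } r>0 \Big\},
\]
where $\epsilon$ is the constant of Lemma~\ref{epsilonconv}. Choosing disjoint small coordinate disks around distinct points of $\Sigma_0$ and using superadditivity of $\liminf$ together with the uniform bound $E(u_\alpha)<B$ forces $\#\Sigma_0 \leq B/\epsilon$, so $\Sigma_0 = \{z_1,\dots,z_l\}$ is finite.

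Next I would establish the convergence off $\Sigma_0$. For each $p \notin \Sigma_0$ there is $r_p>0$ and a subsequence along which $\int_{\Sigma\cap D_{r_p}(p)}|\nabla u_\alpha|^2 < \epsilon$; covering the (second countable) set $\Sigma - \Sigma_0$ by countably many such disks and diagonalizing produces a single subsequence for which Lemma~\ref{epsilonconv} applies on every half-disk. The local $C^1$ limits are smooth harmonic, meet $M$ orthogonally along the boundary, and agree on overlaps (each coincides with the given $L^2$ limit), hence patch to a smooth harmonic map $u$ on $\Sigma - \Sigma_0$ with $u_\alpha \to u$ in $C^1_{\mathrm{loc}}(\Sigma-\Sigma_0,N)$ satisfying the free boundary condition there; lower semicontinuity of energy gives $E(u) \leq \liminf_{\alpha\to 1} E(u_\alpha) \leq B$. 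Then I would invoke the removable singularity theorems — the interior one of \cite{SU1} at points $z_i \in \mathrm{int}\,\Sigma$ and its boundary analog in \cite{F} at points $z_i \in \partial\Sigma$ — to extend $u$ to a smooth harmonic map $(\Sigma,\partial\Sigma) \to (N,M)$ that is $C^1$ up to the boundary, so that orthogonality persists at the $z_i$ by continuity. This gives everything except the last sentence.

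For the fundamental group statement, assume $\Sigma$ is not a disk and $(u_\alpha)_* \sim f_*$. Pick a basepoint $* \in \partial\Sigma \setminus \Sigma_0$, a spine (a $1$-complex) $P \subset \mathrm{int}\,\Sigma$ onto which $\Sigma$ deformation retracts, perturbed off the finitely many interior points of $\Sigma_0$ (possible since $\dim P = 1 < 2 = \dim\Sigma$), and finitely many arcs from $*$ into $\partial\Sigma$ realizing a generating set of $\pi_1(\Sigma,\partial\Sigma,*)$, chosen by general position — with endpoints slid along the boundary circles away from the finitely many boundary points of $\Sigma_0$ — to avoid $\Sigma_0$. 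Let $Q$ be a connected compact subcomplex of $\Sigma \setminus \Sigma_0$ containing $*$, $P$, and these arcs; then the inclusion induces surjections $\pi_1(Q) \to \pi_1(\Sigma)$ and $\pi_1(Q, Q\cap\partial\Sigma) \to \pi_1(\Sigma,\partial\Sigma)$. On the \emph{compact} set $Q$ the convergence $u_\alpha \to u$ is uniform, so for $\alpha$ near $1$ the maps $u_\alpha|_Q$ and $u|_Q$ are $C^0$-close; straight-line interpolation in $\mathbb{R}^K$ followed by nearest-point retraction onto $N$ (and, along $Q \cap \partial\Sigma$, onto $M$, using a tubular neighborhood of $M$ in $N$) gives a homotopy of pairs between them. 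Composing $(u_\alpha|_Q)_* = (u|_Q)_*$ with the surjections above, and accounting for the change of basepoint $u(*) \leftrightarrow u_\alpha(*)$ through a short connecting path $\lambda$, yields $u_* \sim (u_\alpha)_* \sim f_*$ on $\pi_1(\Sigma)\times\pi_1(\Sigma,\partial\Sigma)$.

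The routine analytic input — the interior $\varepsilon$-regularity estimate behind Lemma~\ref{epsilonconv} and the removable singularity theorems — is imported from \cite{SU1} and \cite{F}, so I expect the genuine work to be in the last step: arranging simultaneously a generating system for $\pi_1(\Sigma)$ and for the relative group $\pi_1(\Sigma,\partial\Sigma)$ that avoids the concentration set (the boundary points of $\Sigma_0$ are the delicate case, since relative classes are represented by arcs ending on $\partial\Sigma$), and upgrading local uniform $C^0$-closeness to an honest homotopy of maps of pairs. The finiteness of $\Sigma_0$ and the diagonal extraction are standard but must be set up carefully so that one subsequence works on all of $\Sigma - \Sigma_0$.
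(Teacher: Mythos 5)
Your proposal is correct and follows essentially the same route as the paper: the convergence off a finite concentration set plus removable singularities is exactly the Sacks--Uhlenbeck/Fraser argument the paper cites to \cite{SU1} (Theorem 4.4) and \cite{F} (Theorem 1.15), and your treatment of the fundamental-group statement — choosing generators of $\pi_1(\Sigma)$ and representatives of the relative classes that avoid $\{z_1,\dots,z_l\}$, then using the uniform $C^1$ convergence on a compact set containing them to produce a homotopy and a connecting basepoint path — is the paper's argument, carried out with somewhat more care about the homotopy of pairs and the boundary points of the concentration set.
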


\begin{proof}
The convergence part is Theorem 4.4 in \cite{SU1} and Theorem 1.15 in \cite{F}. We need only verify that the induced map of $u_{\alpha}$ on the fundamental group is preserved in the limiting process, regardless of a finite set of points where bubbling may occur.

Choose, as generators of $\pi_{1}(\Sigma)$, $k+2g$ loops through a base point $*$ in $\Sigma$ such that all of these curves $\{\gamma_j\}$ stay away from the points $\{z_{1},\cdots,z_{l}\}$ where the $C^1$ convergence fails. Since $u_{\alpha} \rightarrow u$ in $C^{1}(\Sigma-\{z_{1},\cdots,z_{l}\},N)$, $u_{\alpha}(\cup_j \gamma_{j})$ is homotopic to $u(\cup_j \gamma_{j})$ for $\alpha$ sufficiently close to 1. It follows that there exists a path connecting $u_{\alpha}(*)$ and $u(*)$ such that $u_{\alpha}(\gamma_{j})$ can be deformed to $u(\gamma_{j})$ along the same path for each $j$. Therefore by definition, $u$ induces the same action as $u_{\alpha}$, and thus $f$ on $\pi_{1}(\Sigma)$. On the other hand, $\pi_{1}(\Sigma, \partial \Sigma)$ is the set of free homotopy classes of paths from a fixed point $* \in \partial \Sigma$ to a varying point on $\partial \Sigma$, and for each class a representative can be chosen away from the points $\{z_{1},\cdots,z_{l}\}$. Therefore $u$ induces the same action as $u_{\alpha}$ for $\alpha$ sufficiently close to 1, and thus $f$ on $\pi_{1}(\Sigma, \partial \Sigma)$.
\end{proof}

\begin{remark}
When $\Sigma$ is a disk $D$, to show the action on $\pi_1$ is preserved in the limit, the argument above cannot be applied as a blowup point may be on a generator of $\pi_1(\partial D)$, and we need a different argument. See Section \ref{disk}.
\end{remark}

The convergence may fail at a finite number of interior or boundary points, where bubbling occurs. Thus the homotopy class of $\{u_{\alpha}\}$ can be altered in the limiting process. However, under stronger conditions on the topology of $N$ and $M$, we obtain the existence of a harmonic map in any homotopy class. This can be interpreted as the free boundary analog of Theorem 5.1 in  \cite{SU1}.

\begin{theorem} \label{homotopyclass}
Let $N$, $M$ and $\Sigma$ be as in Theorem 1.1. If in addition $\pi_{2}(N)=0$ and $\pi_{2}(N,M)=0$, then there exists a minimizing harmonic map satisfying the free boundary condition in every free homotopy class of maps in $C^{0}\left((\Sigma,\partial \Sigma), (N, M)\right)$.
\end{theorem}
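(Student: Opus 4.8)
The plan is to carry out the Sacks--Uhlenbeck scheme of \cite{SU1} in its free boundary form (Proposition~\ref{existence-alpha}--Theorem~\ref{globalconvergence}) and then use the vanishing of $\pi_2(N)$ and $\pi_2(N,M)$ to rule out any loss of topological information through bubbling. Work with the fixed background metric on $\Sigma$. Fix a free homotopy class of maps of pairs, choose a smooth representative $f$, and set
\[
  \mathcal E=\inf\{E(v): v\in C^\infty(\Sigma,N),\ v(\partial\Sigma)\subseteq M,\ v\simeq f\},
\]
the infimum of the Dirichlet energy over the class. For each $\alpha>1$ let $W_\alpha^{[f]}=\{v\in W^{1,2\alpha}(\Sigma,N):v(\partial\Sigma)\subseteq M,\ v\simeq f\ \text{through maps of pairs}\}$. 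This set is nonempty and $E_\alpha$ attains its infimum $I_\alpha$ on it at some $u_\alpha$ by exactly the argument of Proposition~\ref{existence-alpha}: the only change is that the constraint $u_*\sim f_*$ is replaced by membership in the free homotopy class of pairs, which, like the conjugacy condition, is preserved under uniform convergence (two $C^0$-close maps of pairs into $(N,M)$ are homotopic through maps of pairs, using the tubular neighbourhoods of $N$ in $\mathbb R^K$ and of $M$ in $N$). Each $u_\alpha$ is then a critical map of $E_\alpha$ satisfying the natural free boundary condition, i.e. $u_\alpha(\partial\Sigma)\subseteq M$ with $u_\alpha(\Sigma)$ meeting $M$ orthogonally along the boundary, so it is of the type treated in Theorem~\ref{globalconvergence}.

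First I would record the energy bookkeeping. For any smooth $v\simeq f$ one has $I_\alpha\le E_\alpha(v)=\int_\Sigma(1+|\nabla v|^2)^\alpha\,d\mu$, and since $(1+t)^\alpha$ decreases to $1+t$ as $\alpha\downarrow 1$, dominated convergence gives $E_\alpha(v)\to\operatorname{Area}(\Sigma)+E(v)$; hence $\limsup_{\alpha\to1}I_\alpha\le\operatorname{Area}(\Sigma)+\mathcal E$. Conversely $(1+t)^\alpha\ge 1+t$ gives $I_\alpha=E_\alpha(u_\alpha)\ge\operatorname{Area}(\Sigma)+E(u_\alpha)$, so $\limsup_{\alpha\to1}E(u_\alpha)\le\mathcal E$; in particular $\{E(u_\alpha)\}$ is uniformly bounded. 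Passing to a subsequence, $u_\alpha\to u$ strongly in $L^2$ and weakly in $W^{1,2}$, so Theorem~\ref{globalconvergence} applies: there is a finite set $\{z_1,\dots,z_l\}\subset\Sigma$ with $u_\alpha\to u$ in $C^1(\Sigma\setminus\{z_1,\dots,z_l\},N)$, and $u:(\Sigma,\partial\Sigma)\to(N,M)$ is a \emph{smooth} harmonic map on all of $\Sigma$ satisfying the free boundary condition. Weak lower semicontinuity of the Dirichlet energy gives $E(u)\le\liminf_{\alpha\to1}E(u_\alpha)\le\mathcal E$. It therefore only remains to show that $u$ still lies in the chosen free homotopy class: once $[u]=[f]$ we get $E(u)\ge\mathcal E$, hence $E(u)=\mathcal E$, and $u$ is the asserted energy-minimizing harmonic map with free boundary.

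To see that $[u]=[f]$, fix disjoint small geodesic disks $D_i$ around the interior points $z_i$ and geodesic half-disks $D_i$ around the boundary points $z_i$, together with slightly larger concentric $D_i'$ on whose collars $D_i'\setminus D_i$ the $C^1$ convergence holds. For $\alpha$ close to $1$, using the $C^0$-closeness of $u_\alpha$ to $u$ on these collars I would first deform $u_\alpha$, supported in the collars, so that it agrees with $u$ on the $\partial D_i$; after this $u_\alpha$ coincides with $u$ outside $\bigcup_i D_i$ up to a homotopy of pairs of the complement (geodesic interpolation in $N$, composed with the retraction onto $M$ along $\partial\Sigma$). On each $D_i$ the two continuous maps $u_\alpha|_{D_i}$ and $u|_{D_i}$ then agree on the part of $\partial D_i$ interior to $\Sigma$ and send the boundary arc $D_i\cap\partial\Sigma$ (when $z_i\in\partial\Sigma$) into $M$. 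Gluing $u_\alpha|_{D_i}$ to the reverse of $u|_{D_i}$ along that interior part yields a map $S^2\to N$ when $z_i$ is interior, and a map $(D^2,\partial D^2)\to(N,M)$ when $z_i\in\partial\Sigma$; these represent classes in $\pi_2(N)$, respectively $\pi_2(N,M)$, both of which vanish. By standard obstruction theory, $u_\alpha|_{D_i}$ is then homotopic to $u|_{D_i}$ rel the interior boundary arc and respecting the condition that $D_i\cap\partial\Sigma$ stays in $M$; piecing this together with the homotopy on the complement yields $u_\alpha\simeq u$ as maps of pairs. (Equivalently, the rescaled bubbles at the $z_i$ are harmonic two-spheres in $N$ or harmonic disks in $(N,M)$ meeting $M$ orthogonally, all null-homotopic by the $\pi_2$ hypotheses; the crude topological argument above avoids having to analyze the individual bubbles.) Since the argument uses only $\pi_2$ and the topological set-up, it is uniform in $\Sigma$, covering the disk case as well.

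The step I expect to be the main obstacle is precisely this last homotopy bookkeeping at the boundary points $z_i\in\partial\Sigma$: one must verify carefully that the obstruction to sliding $u_\alpha$ onto $u$ across a small half-disk meeting $\partial\Sigma$ is genuinely an element of $\pi_2(N,M)$ (rather than of some larger group), and that all of the preliminary deformations in the collars can be performed while keeping the boundary arcs inside $M$. By contrast, every analytic ingredient is already available: existence of the $\alpha$-energy minimizers in the class (Proposition~\ref{existence-alpha}, adapted), $\varepsilon$-regularity up to the free boundary (Lemma~\ref{epsilonconv}), $C^1$ convergence away from finitely many points, and smoothness of the harmonic limit including orthogonality (Theorem~\ref{globalconvergence}).
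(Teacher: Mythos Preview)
Your proposal is correct and takes a genuinely different, somewhat shorter route than the paper. Both arguments hinge on the same topological step: at each potential bubble point $z_i$ one constructs a comparison map by interpolating between $u_\alpha$ and $u$ on a collar, and then invokes $\pi_2(N)=0$ (interior $z_i$) or $\pi_2(N,M)=0$ (boundary $z_i$) to conclude that the two maps are homotopic as maps of pairs. The paper uses this to show that $u_\alpha$ is homotopic to a modified map $\widehat u_\alpha$ whose $\alpha$-energy on the small disk is controlled by $E(u|_{D_\rho})$; minimality of $u_\alpha$ then forces $E(u_\alpha|_{D_\rho})$ below the $\varepsilon$ of Lemma~\ref{epsilonconv}, so no bubble can occur and $u_\alpha\to u$ in $C^1(\Sigma,N)$ globally. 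You instead skip the energy comparison entirely and use the $\pi_2$ vanishing to conclude directly that $u_\alpha\simeq u$, then combine $E(u)\le\liminf E(u_\alpha)\le\mathcal E$ with $E(u)\ge\mathcal E$ to finish. Your argument suffices for the theorem as stated; the paper's argument buys the stronger conclusion that bubbling is actually excluded, which it reuses later (the interior case of Lemma~\ref{split}). You have correctly identified the one delicate point---the homotopy bookkeeping at boundary bubble points, in particular keeping all intermediate maps in $M$ along $\partial\Sigma$---and the paper treats exactly this by composing the geodesic interpolation with the nearest-point projection onto $M$ in a shrinking tubular neighbourhood, which is precisely the ``retraction onto $M$'' you invoke.
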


\begin{proof}
Let $u_{\alpha}$ be a minimizing map of $E_{\alpha}$ in a fixed homotopy class of $f$ for a sequence $\alpha \rightarrow 1$. Then $\{u_{\alpha}\}$ has uniformly bounded energy by the note after Proposition \ref{existence-alpha}. By Theorem \ref{globalconvergence}, there exists a subsequence such that $u_{\alpha} \rightarrow u$ in $C^{1}(\Sigma - \{z_{1}, \cdots, z_{l}\}, N)$ and $u: (\Sigma, \partial \Sigma) \rightarrow (N,M)$ is a smooth harmonic map satisfying the free boundary condition. We claim that under the topological assumptions of the theorem, $u_{\alpha} \rightarrow u$ in $C^{1}(\Sigma, N)$.

At each point $z_i$ where the $C^1$ convergence fails, center a small disk $D_\rho$ in $\Sigma$ about $z_i$ of radius $\rho$, where $\rho$ is small enough so that $z_j \notin \bar{D}_\rho$ for $j \neq i$. First, assume $z_i$ is an interior point, and choose $\rho$ small enough so that $D_\rho \cap \partial \Sigma = \emptyset$. Let $\eta(r)$ be a smooth function that is $1$ for $r \geq 1$ and $0$ for $r \leq \frac{1}{2}$, and as in \cite{SU1} Theorem 5.1, define a modified map $\widehat{u}_\alpha$ by
\begin{equation} \label{eq:tildeu}
  \widehat{u}_{\alpha}(z)
  =\exp_{u(z)}\left(\eta\big(|z|/\rho\big) \exp^{-1}_{u(z)} \left( u_{\alpha}(z) \right) \right),
\end{equation}
where $\exp$ is the exponential map on $N$. Then $\widehat{u}_\alpha$ agrees with $u_\alpha$ outside $D_\rho$ and with $u$ on $D_{\rho/2}$, and $\widehat{u}_\alpha \rightarrow u$ in $C^1(D_\rho,N)$. By assumption, $\pi_2(N)=0$ and so $u_\alpha$ and $\widehat{u}_\alpha$ are homotopic.

If $z_i$ is a boundary point, let $A \subset \partial \Sigma$ be the segments of the intersection of $\partial \Sigma$ with the annulus $\{\frac{\rho}{2} < |z| < \rho\}$. For the map defined in (\ref{eq:tildeu}), $\widehat{u}_{\alpha}(A)$ may not lie in $M$, however we can modify the map so that it does satisfy the boundary condition. Since $u_\alpha \rightarrow u$ in $C^1$ on $\bar{D}_\rho - D_{\rho/2}$ and $u(\partial \Sigma) \subset M$, we may choose a neighborhood $\Omega_\alpha$ of $A$ in $\bar{D}_\rho - D_{\rho/2}$ that lies in a tubular neighborhood of $\partial \Sigma$ and so that the nearest point projection from $\partial \Omega_\alpha \cap \mbox{int}(\Sigma)$ to $\partial \Sigma$ is one-to-one, such that $\widehat{u}_\alpha(\Omega_\alpha)$ lies in a tubular neighborhood of $M$ in $N$,
with $|\Omega_\alpha|\rightarrow 0$ as $\alpha \rightarrow 1$. On $\Omega_\alpha$, we redefine $\widehat{u}_\alpha$ to map each geodesic segment between a point $z$ of $\partial \Omega_\alpha - A$ and its nearest point in $\partial \Sigma$ proportionally to the geodesic segment in $N$ between $\widehat{u}_\alpha(z)$ and its nearest point in $M$. This modified $\widehat{u}_\alpha$ is piecewise smooth since $u$, $u_\alpha$, the exponential map, and the nearest point projection maps are smooth. Moreover, since $u_\alpha \rightarrow u$ in $C^1$ on $\bar{D}_\rho - D_{\rho/2}$, $|\nabla \widehat{u}_\alpha|$ is bounded independent of $\alpha$ on $\Omega_\alpha$, and since $|\Omega_\alpha| \rightarrow 0$ we have $\lim_{\alpha \rightarrow 1} E_\alpha(\widehat{u}_\alpha|_{\Omega_\alpha})=0$. Finally, since $\widehat{u}_{\alpha}=u_{\alpha}$ on the half circle $\{|z|=\rho\}$, the assumption $\pi_{2}(N,M)=0$ implies that  there exists a homotopy between the disk $u_{\alpha}({D_{\rho}}) \cup  \widehat{u}_{\alpha}({D_{\rho}})$ and a disk in $M$, relative to the boundary $u_{\alpha}(D_{\rho} \cap \partial \Sigma) \cup \widehat{u}_{\alpha}(D_{\rho} \cap \partial \Sigma)$. Hence the two disks bound a $3$-dimensional disk in $N$ and there exists a homotopy between $u_{\alpha}$ and $\widehat{u}_{\alpha}$ relative to the half circle $\{|z| = \rho \}$, mapping the boundary $D_{\rho} \cap \partial \Sigma$ into $M$.

Therefore in either case, whether $z_i$ is an interior or boundary point of $\Sigma$, we have defined a map $\widehat{u}_\alpha$ homotopic to $u_\alpha$ such that
\[
      \lim_{\alpha \rightarrow 1} \tilde{E}_\alpha(\widehat{u}_\alpha|_{D_\rho})=E(u|_{D_\rho}),
\]
where $\tilde{E}_\alpha(u)=\int \left( (1+|\nabla u|^2)^\alpha -1 \right) \, d\mu$.
Since $u_\alpha$ is minimizing for $E_\alpha$ in its homotopy class,  
$E_\alpha(u_\alpha|_{D_\rho}) \leq E_\alpha(\widehat{u}_\alpha|_{D_\rho})$. Therefore,
\[
      \limsup_{\alpha \rightarrow 1} E(u_\alpha|_{D_\rho}) 
      \leq \limsup_{\alpha \rightarrow 1} \tilde{E}_\alpha(u_\alpha|_{D_\rho}) 
      \leq \lim_{\alpha \rightarrow 1} \tilde{E}_\alpha(\widehat{u}_\alpha|_{D_\rho})
      =E(u|_{D_\rho})
      \leq \pi \rho^2 \|u\|_{1,\infty}^2.
\]
Choose $\rho$ sufficiently small so that $\pi \rho^2 \|u \|_{1,\infty}^2 < \epsilon/2$, where $\epsilon$ is as in Lemma \ref{epsilonconv}. Then for $\alpha$ sufficiently close to 1, we have $E(u_\alpha|_{D_\rho}) < \epsilon$ and by Lemma \ref{epsilonconv}, $u_\alpha \rightarrow u$ in $C^1$ on $D_\rho$.
Hence $u_{\alpha} \rightarrow u$ in $C^{1}(\Sigma,N)$ and $u$ is in the same free homotopy class as $f$. 
\end{proof}

We will need the following convergence result for harmonic maps with respect to varying conformal structures on $\Sigma$.

\begin{theorem} \label{convergence-conformal}
Let $u_{i}: (\Sigma, \partial \Sigma) \rightarrow (N,M)$ be a harmonic map satisfying the free boundary condition for a conformal structure $c_{i}$ on $\Sigma$, where $\Sigma$ is not of disk type. Suppose $c_{i}$ converges to a conformal structure $c$ in the $C^{\infty}$-topology and $E(u_{i},c_{i})\leq B$. Then there exist a subsequence $\{u_{i}\}$ and a finite set of points $\{z_{1},\cdots,z_{l}\}$ such that $u_{i} \rightarrow u$ in $C^{1}(\Sigma-\{z_{1},\cdots,z_{l}\},N)$, where $u: (\Sigma, \partial \Sigma) \rightarrow (N,M)$ is a smooth harmonic map satisfying the free boundary condition, and $E(u,c) \leq \liminf_{i \rightarrow \infty}E(u_{i},c_{i})$. Furthermore, if each $u_{i}$ induces the same action on the fundamental group as $f$, then so does $u$.
\end{theorem}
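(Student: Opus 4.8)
The plan is to adapt the bubbling analysis of Theorem~\ref{globalconvergence} to the situation of varying conformal structures, using the fact that $c_i \to c$ in $C^\infty$ to transfer everything to a fixed background. First I would fix a smooth reference metric $h$ on $\Sigma$ compatible with the limit conformal structure $c$, and pick smooth metrics $h_i$ compatible with $c_i$ with $h_i \to h$ in $C^\infty(\Sigma)$; since the harmonic map equation and the free boundary condition are conformally invariant in dimension two, each $u_i$ is harmonic with free boundary for $(\Sigma, h_i)$, and $E(u_i, c_i)$ is the Dirichlet energy with respect to $h_i$. The uniform bound $E(u_i,c_i) \le B$ together with $h_i \to h$ gives a uniform bound on the Dirichlet energy of $u_i$ measured in the \emph{fixed} metric $h$, so the $\alpha = 1$ version of the $\varepsilon$-regularity and removable singularity machinery of \cite{SU1} and \cite{F} applies directly to the sequence $u_i$ on $(\Sigma, h)$: after passing to a subsequence, $u_i \rightharpoonup u$ weakly in $W^{1,2}$, the energy concentration set is a finite collection of interior and boundary points $\{z_1,\dots,z_l\}$ determined by the usual concentration-of-compactness argument (no more than $B/\varepsilon$ of them), and away from these points the small-energy convergence lemma (the $\alpha=1$ analog of Lemma~\ref{epsilonconv}, applied in coordinate disks where $E(u_i)$ on the disk is eventually below $\varepsilon$) upgrades weak convergence to $C^1_{loc}$ convergence on $\Sigma - \{z_1,\dots,z_l\}$. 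The limit $u$ is then a weakly harmonic map that is smooth and harmonic with the free boundary condition away from the $z_j$, and by the removable singularity theorem for harmonic maps with free boundary (interior removable singularities from \cite{SU1}, boundary ones from \cite{F}), $u$ extends to a smooth harmonic map $(\Sigma,\partial\Sigma) \to (N,M)$ satisfying the free boundary condition on all of $\Sigma$.

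The energy inequality $E(u,c) \le \liminf_i E(u_i,c_i)$ follows from weak lower semicontinuity of the Dirichlet energy: on any compact subset $K \subset \Sigma - \{z_1,\dots,z_l\}$ the $C^1$ convergence gives $E(u,c)|_K = \lim_i E(u_i, c_i)|_K \le \liminf_i E(u_i,c_i)$, and then letting $K$ exhaust $\Sigma - \{z_1,\dots,z_l\}$ and using that a finite point set has measure zero yields $E(u,c) \le \liminf_i E(u_i,c_i)$. (One should note that energy may be lost into the bubbles, so this is only an inequality — but that is all that is claimed.)

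For the preservation of the induced action on the fundamental group, I would repeat the argument in the proof of Theorem~\ref{globalconvergence} verbatim, since it only uses $C^1_{loc}$ convergence away from finitely many points and the topological structure of $\Sigma$, both of which are now in hand: choose the $k+2g$ standard generating loops of $\pi_1(\Sigma)$ and representatives of classes in $\pi_1(\Sigma,\partial\Sigma)$ all avoiding $\{z_1,\dots,z_l\}$ (possible since these are finitely many points and $\Sigma$ is not a disk, so no generator is forced through a given point), and observe that $C^1$ convergence on a neighborhood of the union of these curves forces $u_i$ and $u$ to agree up to a homotopy realized along a single path joining $u_i(*)$ to $u(*)$; hence $u_* \sim (u_i)_* \sim f_*$ on both $\pi_1(\Sigma)$ and $\pi_1(\Sigma,\partial\Sigma)$.

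The main obstacle is making sure the $\varepsilon$-regularity and removable singularity results, which in \cite{SU1} and \cite{F} are stated either for the $\alpha$-energy with $\alpha \to 1$ or for a fixed metric, apply cleanly to a sequence of genuine ($\alpha = 1$) harmonic maps with respect to \emph{varying} metrics $h_i$ converging to $h$; the point to check is that all the constants ($\varepsilon$, the interior and boundary small-energy estimates, the tubular-neighborhood and orthogonality estimates near $\partial\Sigma$ used in \cite{F}) depend on the metric only through its $C^k$-norm on a fixed compact region and the geometry of $(N,M)$, so they are uniform along the convergent sequence $h_i \to h$. Once that uniformity is granted, the boundary case of the concentration and regularity analysis — controlling bubbling at points of $\partial\Sigma$ and applying the free boundary removable singularity theorem there — is the only genuinely delicate piece, and it is handled exactly as in \cite{F}.
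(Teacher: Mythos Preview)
Your proposal is correct and follows essentially the same approach as the paper: the paper simply cites \cite{SU2} Theorem 2.3 for the convergence result with varying conformal structures (noting that the argument carries over to bordered surfaces using the boundary regularity from \cite{F}) and refers back to the proof of Theorem~\ref{globalconvergence} for the preservation of the action on the fundamental groups, which is exactly the strategy you have spelled out in detail. Your explicit identification of the uniformity of the $\varepsilon$-regularity constants along the $C^\infty$-convergent sequence of metrics is precisely the point hidden in the phrase ``the argument carries through.''
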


\begin{proof}
The convergence for varying conformal structures on a closed surface lying in a bounded set is shown in \cite{SU2} Theorem 2.3, and the argument carries through for bordered surfaces.
The argument that the induced map of $u_{i}$ on the fundamental group is preserved in the limiting process is as in the proof of Theorem \ref{globalconvergence} above.
\end{proof}

\section{Minimal surfaces of non-disk type}

Recall that the Euler characteristic of a surface of genus $g$ with $k$ boundary components is 
$\chi({\Sigma})= 2-2g-k$. On the disk $D$, there is only one conformal structure, and any smooth harmonic map $u: D \rightarrow N$ with $u(\partial D) \subset M$ and meeting $M$ orthogonally along $u(\partial D)$ is conformal, and hence a branched minimal immersion. When $\Sigma$ is not a disk, that is when $\chi(\Sigma) \leq 0$, in order to produce a branched minimal immersion, we must vary the conformal structure on $\Sigma$.
Let $\mathcal M(\Sigma)$ denote the space of conformal structures on $\Sigma$ with the $C^{\infty}$-topology.
Given a conformal structure $c \in \mathcal {M}(\Sigma)$, let $g$ be a Riemannian metric compatible with $c$ and $d\mu$ be the area element induced by $g$. We then consider 
\[
      E: W^{1,2}(\Sigma,N) \times \mathcal{M}(\Sigma) \rightarrow \mathbb{R}
\]
where
$$E(u,c)= \int_{\Sigma}|\nabla_{g} u|_{g}^{2} \ d\mu.$$ 
By virtue of the conformal invariance of the energy functional, this is independent of the choice of the metric $g$ and is well-defined. We note that $E(\cdot,c)$ is lower semi-continuous in $u$, and $E(u,\cdot)$ is continuous in $c$. 

For each conformal structure $c$ we have produced, by Proposition \ref{existence-alpha} and Theorem \ref{globalconvergence}, when $\Sigma$ is not a disk, a smooth harmonic map $u_{c}$ in the admissible space
$$
  W_f=\{u \in W^{1,2}(\Sigma, N) \ | \ u(\partial \Sigma) \subseteq M, \  u_{*} \sim f_{*}\}
$$ 
such that $E(u_{c},c)=\inf\limits_{u\in W_f}E(u,c)$ (note that for $f \in W^{1,2}(\Sigma, N)$, $f_{*}$ can be defined as in section 1, Schoen-Yau \cite{SY}). Now we define a functional on the space of conformal structures $\bar{E}: \mathcal M(\Sigma) \rightarrow \mathbb{R}$ by 
$$
  \bar{E}(c)=\inf\limits_{u \in W_f}E(u,c)=E(u_{c},c).
$$

\begin{lemma} \label{lem3.1}
$\bar{E}$ is continuous on $\mathcal {M}(\Sigma)$.
\end{lemma}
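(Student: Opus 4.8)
The plan is to establish continuity of $\bar{E}$ at an arbitrary conformal structure $c_0 \in \mathcal{M}(\Sigma)$ by proving upper and lower semicontinuity separately, using the convergence machinery already in place, in particular Theorem~\ref{convergence-conformal}. So let $c_i \to c_0$ in the $C^\infty$-topology on $\mathcal M(\Sigma)$; I want $\bar{E}(c_i) \to \bar{E}(c_0)$.

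For upper semicontinuity, i.e.\ $\limsup_i \bar{E}(c_i) \leq \bar{E}(c_0)$, the idea is to use the minimizer $u_{c_0}$ for $c_0$ as a comparison map for each $c_i$. Since $u_{c_0} \in W_f$ is admissible for every conformal structure (the constraint $u_*\sim f_*$ and $u(\partial\Sigma)\subseteq M$ is independent of $c$), we have $\bar{E}(c_i) = E(u_{c_i},c_i) \leq E(u_{c_0},c_i)$. Now I invoke the stated continuity of $E(u,\cdot)$ in $c$ (``$E(u,\cdot)$ is continuous in $c$''): $E(u_{c_0},c_i) \to E(u_{c_0},c_0) = \bar{E}(c_0)$. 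Hence $\limsup_i \bar{E}(c_i)\leq \bar{E}(c_0)$. This direction is essentially immediate.

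For lower semicontinuity, i.e.\ $\liminf_i \bar{E}(c_i) \geq \bar{E}(c_0)$, I first note the sequence $\bar{E}(c_i)$ is bounded: by the upper bound just obtained it is bounded above for large $i$, and we may assume (passing to a subsequence realizing the $\liminf$) that $E(u_{c_i},c_i)$ converges and is $\leq B$ for some $B$. The maps $u_{c_i}$ are smooth harmonic maps for $c_i$ satisfying the free boundary condition, each inducing the same action as $f$ on the fundamental groups, with uniformly bounded energy. Since $\Sigma$ is not of disk type, Theorem~\ref{convergence-conformal} applies: after passing to a further subsequence, $u_{c_i} \to u$ in $C^1(\Sigma - \{z_1,\dots,z_l\},N)$ where $u:(\Sigma,\partial\Sigma)\to(N,M)$ is a smooth harmonic map with free boundary, $E(u,c_0)\leq \liminf_i E(u_{c_i},c_i)$, and $u$ induces the same action on the fundamental groups as $f$. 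The last property is exactly what guarantees $u \in W_f$, so $\bar{E}(c_0) = \inf_{W_f} E(\cdot,c_0) \leq E(u,c_0) \leq \liminf_i E(u_{c_i},c_i) = \liminf_i \bar{E}(c_i)$.

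Combining the two inequalities gives $\lim_i \bar{E}(c_i) = \bar{E}(c_0)$, which is continuity since $\mathcal M(\Sigma)$ with the $C^\infty$-topology is metrizable (sequential continuity suffices). The only genuinely substantive ingredient is Theorem~\ref{convergence-conformal}, and within it the fact that the fundamental-group action is preserved under the limit despite possible bubbling at the points $z_j$ — this is what keeps the limit map in the admissible class $W_f$ and is the crux of the lower-semicontinuity half; the upper-semicontinuity half is a soft comparison argument. Everything else is bookkeeping with subsequences.
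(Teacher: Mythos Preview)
Your proof is correct and follows essentially the same approach as the paper's. The paper presents the argument as a single chain of inequalities sandwiching $\limsup$ and $\liminf$ between two copies of $E(u,c)$, while you separate it explicitly into upper and lower semicontinuity; but the substance is identical---the comparison map $u_{c_0}$ and the continuity of $E(u,\cdot)$ handle the upper bound, and Theorem~\ref{convergence-conformal} applied along a subsequence realizing the $\liminf$ handles the lower bound.
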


\begin{proof}
Let $c_{i} \rightarrow c$ be a sequence of conformal structures. Suppose $\bar{E}(c_{i})=E(u_{i},c_{i})$ and $\bar{E}(c)=E(u,c)$ for some $u_{i}, u \in W_f$. 
Let $K=\liminf E(u_i,c_i)$, and let $\{ (u_{i_k},c_{i_k})\}$ be a subsequence such that
$E(u_{i_k},c_{i_k}) \rightarrow K$. By Theorem \ref{convergence-conformal} there exists a further subsequence, which we continue to denote by $\{u_{i_k}\}$, and $u_0 \in W_f$ such that
$E(u_0,c) \leq \lim_{k \rightarrow \infty} E(u_{i_k},c_{i_k})$. Then,
\begin{align*}
       E(u,c) &= \lim_i E(u,c_i) \\
                   &\geq \limsup_{i} E(u_i,c_i)   \qquad \mbox{(since $u_i$ is minimizing for $c_i$)} \\
                   &\geq \liminf_i E(u_i,c_i) \\
                   &= \lim_k E(u_{i_k},c_{i_k}) \\
                   &\geq E(u_0,c) \\
                   & \geq E(u,c) \qquad \qquad \qquad  \mbox{(since $u$ is minimizing for $c$)} 
\end{align*}
It follows that $\bar{E}(c)=E(u,c)=\lim\limits_{i} E(u_{i},c_{i})=\lim\limits_{i} \bar{E}(c_{i})$.
\end{proof} 

Suppose $\inf\limits_{\mathcal{M}(\Sigma)} \bar{E}$ is attained at $c\in\mathcal M(\Sigma)$. Let $u$ be a minimizing harmonic map with respect to $c$. Then for any pair $(u',c') \in W_f \times \mathcal M(\Sigma)$, we have 
\begin{equation*}
  E(u,c)=\inf\limits_{W_f}(\cdot,c)=\bar{E}(c) \leq \bar{E}(c')=\inf\limits_{W_f}(\cdot,c') \leq E(u',c').
\end{equation*}

The relationship between such a minimizing pair and the minimal immersion problem is illustrated in the following (c.f. Theorem 1.8 \cite{SU1} and Corollary 1.5  \cite{SU2}).

\begin{theorem} \label{thm3.2}
If $(u,c)$ is a critical point of $E$ on $W_f \times \mathcal M(\Sigma)$, then $u$ is a branched minimal immersion.
\end{theorem}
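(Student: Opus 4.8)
The plan is to analyze what it means for $(u,c)$ to be a critical point of $E$ on the product $W_f \times \mathcal{M}(\Sigma)$ and extract the two Euler--Lagrange conditions separately. Criticality in the $u$-direction (keeping $c$ fixed) says that $u$ is a critical point of the Dirichlet energy among maps in $W^{1,2}(\Sigma,N)$ with $u(\partial\Sigma)\subseteq M$; by the first variation this gives that $u$ is a harmonic map satisfying the free boundary condition, i.e. $u$ is a conformal harmonic map interiorly harmonic with $u(\Sigma)$ meeting $M$ orthogonally along $u(\partial\Sigma)$. The regularity theory quoted earlier (Theorem \ref{globalconvergence} and the $\epsilon$-regularity Lemma \ref{epsilonconv}) already ensures $u$ is smooth up to the boundary. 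So the real content is to show that criticality in the $c$-direction forces $u$ to be (weakly) conformal, because a weakly conformal harmonic map from a surface is exactly a branched minimal immersion.

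First I would set up the variation of the conformal structure. A tangent vector to $\mathcal{M}(\Sigma)$ at $c$ is represented by a holomorphic quadratic differential, or dually by a transverse-traceless symmetric $2$-tensor; equivalently, one perturbs the metric $g$ compatible with $c$ by $g_t = g + t h$ for a symmetric $2$-tensor $h$ and computes $\frac{d}{dt}\big|_{t=0} E(u, c_t)$. The standard computation (as in Sacks--Uhlenbeck \cite{SU2}, Schoen--Yau) shows this derivative is, up to a constant, $\int_\Sigma \langle \Phi, h\rangle \, d\mu$ where $\Phi$ is the (real part of the) Hopf differential of $u$, namely the traceless part of the pullback metric $u^*\langle\,,\,\rangle$. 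Since $u$ is harmonic, $\Phi$ is a holomorphic quadratic differential on $\Sigma$. Setting the derivative to zero for all admissible $h$ — i.e. for all directions tangent to $\mathcal{M}(\Sigma)$ — forces $\Phi$ to be $L^2$-orthogonal to all holomorphic quadratic differentials. But $\Phi$ is itself holomorphic, hence $\Phi = 0$, which is precisely the statement that $u$ is weakly conformal.

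The point requiring the most care — and what I expect to be the main obstacle — is handling the boundary correctly. On a bordered surface the Hopf differential must satisfy a reality/boundary condition along $\partial\Sigma$ coming from the free boundary condition (the orthogonality of $u(\Sigma)$ to $M$), and the space of conformal structures $\mathcal{M}(\Sigma)$ for a surface with boundary pairs with quadratic differentials that are real on $\partial\Sigma$ (equivalently, one works with the Teichmüller theory of bordered surfaces, or doubles across the boundary). I would verify that the free boundary condition makes the Hopf differential $\Phi$ satisfy exactly this boundary condition — so that, after doubling $\Sigma$ across $\partial\Sigma$ to a closed surface $\widehat\Sigma$, $\Phi$ extends to a genuine holomorphic quadratic differential on $\widehat\Sigma$, and the variations of $c$ span the full space of such differentials. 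Then the orthogonality argument closes: $\Phi$ is holomorphic, lies in the space it is being tested against, and is orthogonal to everything in that space, hence vanishes identically.

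Finally, with $\Phi \equiv 0$ established, $u$ is a weakly conformal harmonic map $(\Sigma,\partial\Sigma)\to(N,M)$ satisfying the free boundary condition; by the classical local structure theory for such maps (the pullback metric is $\lambda^2$ times the background metric with isolated zeros of $\lambda$, and near such a zero $u$ has a branch-point normal form), $u$ is a branched minimal immersion, and the branch points on the boundary are accounted for by the free boundary regularity already cited. This completes the argument; the only genuinely delicate step is the boundary bookkeeping in the variation of conformal structure, which is why I would treat the doubled surface explicitly rather than quoting the closed-surface computation verbatim.
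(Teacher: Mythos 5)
Your proposal is correct and reaches the theorem by the same underlying mechanism as the paper: criticality of $E(u,\cdot)$ in the conformal structure forces the Hopf differential (equivalently the traceless part of the pullback metric $u^*\langle\cdot,\cdot\rangle$) to vanish, so $u$ is weakly conformal, and a weakly conformal harmonic map is a branched minimal immersion by Gulliver--Osserman--Royden. The one place where you diverge is in how the vanishing is extracted. The paper exploits that $\mathcal{M}(\Sigma)$ is the full, infinite-dimensional space of conformal structures, not the moduli or Teichm\"uller space: since every variation of the metric decomposes as a conformal change composed with a curve in $\mathcal{M}(\Sigma)$, and $E$ is conformally invariant, criticality on $\mathcal{M}(\Sigma)$ yields criticality with respect to \emph{all} metric variations; the Sacks--Uhlenbeck first-variation computation then kills the traceless part of the pullback metric pointwise, with no appeal to holomorphicity of the Hopf differential and no doubling. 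Your route --- pairing against holomorphic quadratic differentials real along $\partial\Sigma$, using holomorphicity of $\Phi$ (from harmonicity) and its reality on $\partial\Sigma$ (from the free boundary condition, since $\langle u_x,u_y\rangle=0$ there) to place $\Phi$ in the test space and conclude $\Phi\perp\Phi$, hence $\Phi=0$ --- is the heavier, finite-dimensional Teichm\"uller argument that would be required if criticality were only known over the moduli space $\mathcal{R}(\Sigma)$. It is correct as written, and the boundary bookkeeping you single out as the delicate point is exactly the standard reality condition; but in the setting of this theorem it does more work than necessary, since arbitrary traceless symmetric variations of $c$ are already admissible test directions.
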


\begin{proof}
Any variation of the metric arises from a composition of a conformal change in the metric, and a curve in $\mathcal M(\Sigma)$. Hence by the conformal invariance of $E$, the fact that $c$ is a critical point of $E(u,\cdot)$ on $\mathcal M(\Sigma)$ implies that $E$ is critical with respect to any variation of the initial metric induced by $c$. The computation of Sacks and Uhlenbeck (\cite{SU1}, p.6) shows that $u$ is weakly conformal in the interior of $\Sigma$. Then by Gulliver-Osserman-Royden \cite{GOR}, $u$ is a branched minimal immersion.
\end{proof}

\begin{corollary}   \label{cor3.3}
If $(u,c)$ is a minimizer of $E$ on $W_f \times \mathcal {M}(\Sigma)$ with respect to all smooth variations of $c$ preserving the action on the fundamental group, then $u$ minimizes area among all branched immersions having the same action.
\end{corollary}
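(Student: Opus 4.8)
The plan is to use two elementary facts about the energy functional. For any $w\in W^{1,2}(\Sigma,N)$ and any conformal structure $c'$ one has $E(w,c')\geq 2\,\mathrm{Area}(w)$, with equality precisely when $w$ is weakly conformal with respect to $c'$; this is the pointwise estimate $|w_x|^2+|w_y|^2\geq 2|w_x||w_y|\geq 2\sqrt{|w_x|^2|w_y|^2-\langle w_x,w_y\rangle^2}$ for the energy and Jacobian densities in a conformal coordinate. Conversely, a branched immersion is weakly conformal for the conformal structure it induces on $\Sigma$ itself. Combining these with the minimizing property of $(u,c)$ yields the area comparison.

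First, a minimizer of $E$ on $W_f\times\mathcal M(\Sigma)$ is in particular a critical point, so Theorem \ref{thm3.2} applies: $u$ is a branched minimal immersion, hence weakly conformal with respect to $c$, and therefore $E(u,c)=2\,\mathrm{Area}(u)$ (the identity holding a.e., and the branch points forming a null set). Now let $v:(\Sigma,\partial\Sigma)\to(N,M)$ be any branched immersion inducing the same action on the fundamental group as $f$; then $v\in W_f$. Away from its finitely many branch points $v$ is an immersion and $v^{*}\langle\cdot,\cdot\rangle_N$ is a smooth Riemannian metric, whose conformal class extends across the branch points (these are removable singularities for a conformal structure) to a conformal structure $c_v\in\mathcal M(\Sigma)$ for which $v$ is weakly conformal. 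Hence $E(v,c_v)=2\,\mathrm{Area}(v)$, and since $v\in W_f$,
\[
  \bar E(c_v)=\inf_{w\in W_f}E(w,c_v)\leq E(v,c_v)=2\,\mathrm{Area}(v).
\]

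It remains to compare $\bar E(c_v)$ with $E(u,c)=\bar E(c)$. Since the space of conformal structures on the fixed surface $\Sigma$ is connected, $c$ and $c_v$ are joined by a smooth path $\{c_t\}$; along it the associated energy-minimizing maps have uniformly bounded energy, so by Theorem \ref{convergence-conformal} their induced action on the fundamental group is constant, i.e. the path consists of variations of $c$ preserving the action. By hypothesis $(u,c)$ minimizes $E$ against all such variations, so $E(u,c)=\bar E(c)\leq\bar E(c_v)$. Chaining the inequalities,
\[
  2\,\mathrm{Area}(u)=E(u,c)\leq\bar E(c_v)\leq 2\,\mathrm{Area}(v),
\]
which gives $\mathrm{Area}(u)\leq\mathrm{Area}(v)$, as claimed.

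The only delicate point is the passage from a competing branched immersion $v$ to an admissible competitor in $\mathcal M(\Sigma)$: one must verify that $v$ induces a genuine (smooth) conformal structure $c_v$ despite its branch points, and that the path from $c$ to $c_v$ remains within the class of variations for which $(u,c)$ is assumed minimizing — which is precisely where the incompressibility hypotheses enter, since they are what keeps the family of minimizing harmonic maps in $W_f$ with constant action all along the path. Everything else is the standard energy-versus-area bookkeeping described in the first paragraph.
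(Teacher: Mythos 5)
Your argument is correct and is essentially the intended one (the paper states the corollary without proof): $u$ is weakly conformal for $c$ by Theorem \ref{thm3.2}, so $E(u,c)=2\,\mathrm{Area}(u)$; a competing branched immersion $v$ inducing the same action is weakly conformal for its induced conformal structure $c_v$, so $E(v,c_v)=2\,\mathrm{Area}(v)$; and the minimizing property of $(u,c)$ gives $E(u,c)\le E(v,c_v)$. The one step I would flag is the middle one: the detour through path-connectedness of $\mathcal M(\Sigma)$, and the appeal to Theorem \ref{convergence-conformal} to conclude that ``the induced action is constant along the path,'' is both unnecessary and not really what that theorem asserts (it is a compactness statement about limits of harmonic maps, not a statement that the action cannot jump along a family). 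The action constraint is already built into the admissible space $W_f$, and the displayed inequality preceding Theorem \ref{thm3.2} gives $E(u,c)\le E(u',c')$ for every pair $(u',c')\in W_f\times\mathcal M(\Sigma)$, so you can compare directly with the pair $(v,c_v)$ and skip the path argument entirely. This does not affect the validity of your conclusion.
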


Let $\mathcal D(\Sigma)$ denote the topological group of diffeomorphisms of $\Sigma$ onto itself with the $C^{\infty}$-topology of uniform convergence on compact sets of all differentials. Let $\mathcal M(\Sigma)$ denote the space of conformal structures on $\Sigma$. There is a natural action
$$
  \mathcal M(\Sigma) \times \mathcal D(\Sigma) \rightarrow \mathcal M(\Sigma)
$$ 
by pulling back metrics. The Riemann moduli space of $\Sigma$ is defined as the quotient 
$$
  \mathcal R(\Sigma)=\mathcal M(\Sigma)/\mathcal D(\Sigma)
$$ 
consisting of equivalent conformal structures with respect to this action.

Let $\{c_{i}\}$ be an $\bar{E}$-minimizing sequence, i.e. $\bar{E}(c_{i}) \rightarrow \inf\bar{E}$. If a subsequence converges to a conformal structure $c$, then $\bar{E}(c)=\inf\bar{E}$ by Lemma \ref{lem3.1}. In fact it suffices to have a weaker condition that $\{c_{i}\}$ converges in the level of the moduli space.
\begin{lemma} \label{lemma:convergenceconformal}
Let  $\{c_{i}\}$ be an $\bar{E}$-minimizing sequence. If there exist diffeomorphisms $\phi_{i} \in \mathcal D(\Sigma)$ and $c \in \mathcal M(\Sigma)$, such that $\phi _{i}^{*}c_{i} \rightarrow c$ in the $C^{\infty}$-topology, then there exists a minimizing conformal structure for $\bar{E}$. 
\end{lemma}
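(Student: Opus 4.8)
The plan is to pull the $\bar{E}$-minimizing sequence back by the diffeomorphisms $\phi_i$, so that it becomes a sequence of harmonic maps for the \emph{convergent} conformal structures $\tilde c_i := \phi_i^*c_i$; then to pass to a limit with Theorem \ref{convergence-conformal}; and finally to push the limit forward by a single $\phi_{i_0}^{-1}$ in order to land back in the admissible class $W_f$.

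Concretely, for each $i$ Proposition \ref{existence-alpha} and Theorem \ref{globalconvergence} supply a smooth harmonic map $u_i \in W_f$ satisfying the free boundary condition with $E(u_i,c_i)=\bar{E}(c_i)$. I would set $v_i := u_i\circ\phi_i$. Since the Dirichlet energy, the harmonic map equation, and the orthogonality condition along the boundary are all natural under diffeomorphisms of $\Sigma$, the map $v_i$ is a smooth harmonic map $(\Sigma,\partial\Sigma)\to(N,M)$ with free boundary for $\tilde c_i=\phi_i^*c_i$, and $E(v_i,\tilde c_i)=E(u_i,c_i)=\bar{E}(c_i)$. As $\{c_i\}$ is $\bar{E}$-minimizing, $E(v_i,\tilde c_i)\to\inf\bar{E}$ is uniformly bounded, and by hypothesis $\tilde c_i\to c$ in $C^\infty$. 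Theorem \ref{convergence-conformal} then yields a subsequence, a finite set of points $\{z_1,\dots,z_l\}$, and a smooth harmonic map $v:(\Sigma,\partial\Sigma)\to(N,M)$ with free boundary for $c$, with $v_i\to v$ in $C^1(\Sigma-\{z_1,\dots,z_l\},N)$ and
\[
      E(v,c)\ \le\ \liminf_{i\to\infty}E(v_i,\tilde c_i)\ =\ \inf_{\mathcal M(\Sigma)}\bar{E}.
\]
Moreover, by the argument proving the last assertion of Theorem \ref{convergence-conformal} (choose generators of $\pi_1(\Sigma)$ and representatives of the classes in $\pi_1(\Sigma,\partial\Sigma)$ avoiding the points $z_j$, and use the $C^1$ convergence there), $v$ induces the same action on the fundamental groups as $v_i$ once $i$ is large enough — whatever that action happens to be.

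Next I would fix one large index $i_0$ and put $w:=v\circ\phi_{i_0}^{-1}$ and $c^*:=(\phi_{i_0}^{-1})^*c\in\mathcal M(\Sigma)$. By naturality again, $w$ is harmonic for $c^*$, satisfies the free boundary condition, and $E(w,c^*)=E(v,c)$. For the homotopy bookkeeping: $v_{i_0}=u_{i_0}\circ\phi_{i_0}$ induces the action $f_*\circ(\phi_{i_0})_*$, since $u_{i_0}$ induces that of $f$ and $(\phi_{i_0})_*$ is the automorphism of $\pi_1(\Sigma)\times\pi_1(\Sigma,\partial\Sigma)$ determined by $\phi_{i_0}$; since $v$ induces the same action as $v_{i_0}$, the map $w=v\circ\phi_{i_0}^{-1}$ induces $f_*\circ(\phi_{i_0})_*\circ(\phi_{i_0})_*^{-1}=f_*$, i.e. $w_*\sim f_*$. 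Hence $w\in W_f$, so $w$ is an admissible competitor for $\bar{E}(c^*)$, and
\[
      \inf_{\mathcal M(\Sigma)}\bar{E}\ \le\ \bar{E}(c^*)\ \le\ E(w,c^*)\ =\ E(v,c)\ \le\ \inf_{\mathcal M(\Sigma)}\bar{E}.
\]
Therefore $\bar{E}(c^*)=\inf_{\mathcal M(\Sigma)}\bar{E}$, so $c^*$ is the desired minimizing conformal structure.

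The main point requiring care — and the only genuine obstacle — is the bookkeeping of induced homomorphisms: $\bar{E}$ is invariant only under diffeomorphisms isotopic to the identity, not under all of $\mathcal D(\Sigma)$, so one cannot simply assert $\bar{E}(\tilde c_i)=\bar{E}(c_i)$. The resolution is that the reparametrized maps $v_i$ minimize energy in the admissible class for the \emph{conjugate} action $f_*\circ(\phi_i)_*$, and it is precisely the cancellation $(\phi_{i_0})_*\circ(\phi_{i_0})_*^{-1}=\mathrm{id}$ — available only after passing to the limit and pushing back by one fixed $\phi_{i_0}^{-1}$ — that returns us to $W_f$. To make all these compositions literally well defined at the level of based fundamental groups, one may (after composing each $\phi_i$ with a diffeomorphism isotopic to the identity, which changes $\phi_i^*c_i$ only by the action of such a diffeomorphism, hence leaves the hypothesis intact upon replacing $c$ by the corresponding image) assume that every $\phi_i$ fixes the base point.
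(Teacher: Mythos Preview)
Your proof is correct and is essentially the same as the paper's. The only cosmetic difference is the starting point: the paper first constructs minimizing harmonic maps for $\phi_i^{*}c_i$ in the conjugacy class of $f\circ\phi_i$ and then verifies (via a two-sided inequality) that their energies equal $\bar{E}(c_i)$, whereas you begin with the minimizers $u_i\in W_f$ for $c_i$ and pull them back to $v_i=u_i\circ\phi_i$, obtaining $E(v_i,\phi_i^{*}c_i)=\bar{E}(c_i)$ immediately from conformal invariance; after this point the two arguments coincide (apply Theorem \ref{convergence-conformal}, note that the limit $v$ induces the same action as $v_{i}$ for large $i$, push forward by one fixed $\phi_{i_0}^{-1}$ to land in $W_f$, and conclude $\bar{E}((\phi_{i_0}^{-1})^{*}c)=\inf\bar{E}$).
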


\begin{proof} 
Let $u_{i}$ be a minimizing harmonic map for $\phi_{i}^{*}c_{i}$ which induces the same action as $f\circ\phi_{i}$. Then
 $u_{i} \circ \phi_{i}^{-1} \ \text{induces the same action as} \ f$
and we have 
$$
  \bar{E}(c_{i})\leq E(u_{i} \circ  \phi_{i}^{-1},c_{i})= E(u_{i},\phi_{i}^{*}c_{i})
\leq E(u_{c_{i}} \circ \phi_{i},\phi_{i}^{*}c_{i})=E(u_{c_{i}},c_{i})= \bar{E}(c_{i})
$$ 
where $u_{c_{i}}$ denotes a minimizing harmonic map for $c_{i}$ which induces the same action as $f$. Therefore,
\begin{equation} \label{energy}
  E(u_{i},\phi_{i}^{*}c_{i})=\bar{E}(c_{i}).
\end{equation}
By Theorem \ref{convergence-conformal},  there exists a subsequence $u_{i_k}$ such that 
$u_{i_k} \rightarrow u$ in $C^{1}$ on $\Sigma$ minus a finite set of points, and since $\phi _{i}^{*}c_{i} \rightarrow c$, we have 
\begin{equation} \label{energyineq}
  E(u,c)\leq\lim\inf E(u_{i_k},\phi_{i_k}^{*}c_{i_k}).
\end{equation}
For sufficiently large $k$, $u\circ \phi_{i_k}^{-1}$ induces the same action as $u_{i_k} \circ \phi_{i_k}^{-1}$ on the fundamental group by the proof of Theorem \ref{convergence-conformal}, hence is in the admissible space $W_f$, since $u_{i} \circ \phi_{i}^{-1}$ induces the same action as $f$. Then we have
\begin{eqnarray*}
  \bar{E}((\phi_{i_k}^{-1})^{*}c)
  &\leq& E(u\circ \phi_{i_k}^{-1},(\phi_{i_k}^{-1})^{*}c) \\
  &=&E(u,c) \\
  &\leq &\lim\inf E(u_{i_k},\phi_{i_k}^{*}c_{i_k}) \\
  &=&\lim\inf\bar{E}(c_{i_k}) \\
  &=&\inf\limits_{\mathcal M(\Sigma)}\bar{E} 
\end{eqnarray*} 
where the first equality is by conformal invariance of the energy, the second inequality is by (\ref{energyineq}), the second equality is by (\ref{energy}), and the third equality follows since $\{c_{i_k}\}$ is a minimizing sequence of $\bar{E}$. Therefore 
$$
  \bar{E}((\phi_{i_k}^{-1})^{*}c)=\inf\limits_{\mathcal M(\Sigma)}\bar{E}.
$$ 
This proves the existence of a minimizing conformal structure. In fact, for all large $k$, $(\phi_{i_k}^{-1})^*c$ are $\bar{E}$-minimizers.
\end{proof}

Thus by Theorem \ref{thm3.2}, Corollary \ref{cor3.3} and Lemma \ref{lemma:convergenceconformal}, the minimal area problem is reduced to the convergence problem in the moduli space $\mathcal {R}(\Sigma)$. We will now prove part ($i$) of Theorem \ref{thm:existence}. Throughout we let $\{c_{i}\}$ be an $\bar{E}$-minimizing sequence of conformal structures, and $u_{i}$ will denote a minimizing map for $c_{i}$, with $E(u_i,c_i) <B$.

\

\emph{I. $\Sigma$ is not a cylinder.}

\vspace{2mm}

Assume that $\Sigma$ is a surface with $\chi(\Sigma) <0$. For each conformal structure $c_i$ in the minimizing sequence, consider the doubled conformal surface. Applying the compactification theorem of the moduli space of conformal structures for the closed doubled conformal surfaces (Lemma 4 of Abikoff \cite{A}), there is a subsequence of $\{c_i\}$ (which we continue to denote by $\{c_i\}$) and there are diffeomorphisms $\phi_i$ of $\Sigma$ such that either $\phi_i^*c_i \rightarrow c$ in $C^\infty$ or, $(\Sigma, \phi^*c_i)$ converges to a Riemann surface with nodes $\Sigma_\infty$ corresponding to pinching a set of homotopically nontrivial simple closed curves 
in the doubled surface to nodes $w_m$, $m=1, \ldots n$. In the first case, by Lemma \ref{lemma:convergenceconformal} we are done. In the second case, we have curves $\gamma_m$ in $\Sigma$ which are pinched, each of which is either a closed curve (possibly a boundary component) or a curve between two boundary components of $\Sigma$ (corresponding to a closed curve in the doubled surface that crosses $\partial \Sigma$, and must be reflection invariant across $\partial \Sigma$). We may then argue as in \cite{SU2} Theorem 4.3. We  may choose a nested sequence $\{D_j^m\}$ of closed neighborhoods  of $\gamma_m$ such that $D_j^m$ converges to the node $w_m$ of $\Sigma_\infty$, and for each fixed $j$, the change in the conformal structure on $\Sigma$ as $(\Sigma, \phi_i^*c_i) \rightarrow \Sigma_\infty$ is restricted to the interior of $\cup_{m=1}^n D_j^m$ (\cite{B}). Let $\Sigma_j = \Sigma - \cup_{m=1}^n D_j^m$. By Theorem \ref{convergence-conformal}, there is a subsequence $\{u_i^{(1)}\}$ of $\{u_i\}$ that convergences in $C^1(\Sigma_1 -\{p_1, \ldots, p_{\ell_1}\},N)$ to a smooth harmonic map defined on $\Sigma_1$. Given $\{u_i^{(j-1)}\}$, by Theorem \ref{convergence-conformal}, a subsequence $\{u_i^{(j)}\}$ of $\{u_i^{(j-1)}\}$ converges in $C^1(\Sigma_j - \{p_1, \ldots, p_{\ell_j}\}, N)$ to a smooth harmonic map. Since $E(u_i, c_i) < B$, by Lemma \ref{epsilonconv}, $\ell_j < 4B/\epsilon$. Consider the diagonal sequence $\{u_i^{(i)}\}$ which converges to a harmonic map $u$ in $C^1(\Sigma_\infty'-\{p_1, \ldots, p_\ell\},N)$, with $\ell \leq 4B/\epsilon$, where $\Sigma_\infty'$ is the punctured Riemann surface $\Sigma - \{w_1, \ldots , w_n\}$. Since $E(u_i, c_i)<B$ for all $i$, $E(u)<B$, and by Theorem 1.6 of \cite{SU1} and  Theorem 1.10  of \cite{F}, $u$ can be extended to a smooth harmonic map $u: \tilde{\Sigma}_\infty \rightarrow N$ satisfying the free boundary condition, where $\tilde{\Sigma}_\infty=\Sigma_\infty'\cup \{q_1, \ldots, q_s, (q_{s+1},\,q_{s+1}'), \ldots, (q_n,\,q_n')\}$ is the bordered Riemann surface obtained by adding a point $q_m$ at the punctures of $\Sigma_\infty'$ corresponding to the nodes $w_m \in \Sigma_\infty$ resulting from the pinching of components of $\partial \Sigma$, and adding a pair of points $(q_m,q_m')$ at the two punctures of $\Sigma_\infty'$ (which may be boundary points) corresponding to each node $w_m \in \Sigma_\infty$ resulting from the pinching of a closed curve inside $\Sigma$ or a closed curve in the doubled surface that crosses $\partial \Sigma$. Now let $\gamma$ be a curve homotopic to $\gamma_m$, for any fixed $m$ between $1$ and $n$, chosen to lie in $D_j^m$ for $j$ sufficiently large and so as not to contain any of the points $p_1, \ldots, p_\ell$. Since $\gamma \subset \tilde{\Sigma}_\infty$ is homotopically trivial  (either as a closed curve or a relative curve between boundary components) it follows that $u(\gamma)$ is homotopically trivial. But $\lim_{i \rightarrow \infty} u_i^{(i)}(\gamma)=u(\gamma)$, so $u_i^{(i)}(\gamma)$ is homotopically trivial for $i$ sufficiently large. Since $\gamma$ is homotopically nontrivial in $\Sigma$, this contradicts our assumption that the induced map on the fundamental groups is injective. Therefore the second case cannot occur.

\

\emph{II. $\Sigma$ is a cylinder.}

\vspace{2mm}

A cylinder with a conformal structure can be represented by a parallelogram spanned by the vectors $(1,0)$ and $\xi$ in $\mathbb{R}^2$ with sides corresponding to one of the two generators identified. Two cylinders given by $\xi_1$, $\xi_2$, with the same corresponding sides identified, represent conformally equivalent cylinders if $\xi_2=\tau \xi_1$ for some $\tau \in \text{PSL}(2,\mathbb{Z})$. 

Given our minimizing sequence of conformal structures $\xi_i$, and associated minimizing harmonic maps $u_i$, there exist elements $\tau_i \in \text{PSL}(2,\mathbb{Z})$ such that $\tau_i \xi_i$ lies in the fundamental domain of $\text{PSL}(2,\mathbb{Z})$. If $\text{Im}(\tau_i\xi_i) \leq b < \infty$ for all $i$, then a subsequence of $\{\tau_i\xi_i\}$ converges to $\eta$, and by Lemma \ref{lemma:convergenceconformal} we are done. 

Otherwise, suppose that $\kappa_i=\text{Im}(\tau_i\xi_i) \rightarrow \infty$. Let $\eta_i=\tau_i\xi_i$. Then $v_i=u_i \circ \tau_i^{-1}: (\Sigma,\eta_i) \rightarrow N$ is harmonic and $E(v_i,\eta_i)=E(u_i,\xi_i) \leq B$. We consider the following two cases:

$a)$ If the sides corresponding to $\eta_i$ are identified, then on any cylinder $S^1 \times [0, \kappa)$ we can find a subsequence of $\{v_i\}$ which converges in $C^1(S^1 \times [0,\kappa) - \{z_1, \ldots, z_n\},N)$ to a harmonic map $v: S^1 \times [0,\kappa) \rightarrow N$ with $E(v)<B$. Since $\kappa$ was arbitrary, using a diagonal sequence argument as above, we obtain a harmonic map $v: S^1 \times [0,\infty) \rightarrow N$ with $E(v)<B$. But $S^1 \times [0,\infty)$ is conformally $\bar{D} - \{p\}$ for some $p \in D$, and by Theorem 1.6 in \cite{SU1}, $v$ extends to a smooth harmonic map $v:D \rightarrow N$ providing a homotopy of $v_i(S^1 \times \{q\}) \simeq v(S^1 \times \{q\})$ to a point for suitable $q$ and $i$ sufficiently large.  This implies that the generator $\tau_i^{-1}(S^1 \times \{q\})$ of $(\Sigma, \xi_i)$ is mapped by $u_i$, and hence also by $f$, to a loop homotopic to zero, contradicting the assumption that $f_*:\pi_1(\Sigma) \rightarrow \pi_1(N)$ is injective.

$b)$ If the sides corresponding to $(0,1)$ are identified, then on any strip $[0,1] \times (-\kappa,\kappa)$ we can find a subsequence of $\{v_i\}$ which converges in $C^1([0,1] \times (-\kappa,\kappa) - \{z_1, \ldots, z_n\},N)$ to a harmonic map $v: [0,1] \times (-\kappa,\kappa) \rightarrow N)$ with $E(v)<B$. Since $\kappa$ was arbitrary, we can obtain a harmonic map $v: [0,1] \times \mathbb{R} \rightarrow N$ with $E(v)<B$. But $[0,1] \times \mathbb{R}$ is conformally $\bar{D} - \{p_1,\,p_2\}$ for some $p_1,\,p_2 \in \partial D$, and by Theorem 1.10 in \cite{F}, $v$ extends to a smooth harmonic map $v:\bar{D} \rightarrow N$ providing a homotopy of $v_i([0,1] \times \{q\}) \simeq v([0,1] \times \{q\})$ to a point for suitable $q$ and $i$ sufficiently large. This contradicts the assumption that $f_*:\pi_1(\Sigma,\partial \Sigma) \rightarrow \pi_1(N,M)$ is injective.

We have proved part $(i)$ of Theorem \ref{thm:existence}.

\section{Minimizing disks} \label{disk}

Now we prove part $(ii)$ of Theorem 1.1, that
there exists a set of free homotopy classes $\{ \Gamma_j \}$ of closed curves in $M$ such that the elements $\{ \gamma \in \Gamma_j \}$ form a generating set for $\ker i_*$ acted on by $\pi_1(M)$, where
$
      i_{*}: \pi_{1}(M) \rightarrow \pi_{1}(N)
$
is the homomorphism induced by the inclusion, and each $\Gamma_j$ can be represented by the boundary of an area minimizing disk that solves the free boundary problem $(D,\partial D) \rightarrow (N,M)$.
We will need the following lemma.

\begin{lemma} \label{diskvariation}
Let $u: (D,\partial D) \rightarrow (N,M)$ be a critical map of $E_{\alpha}$ on $W^{1,2\alpha}(D,\partial D;N,M)$. Then $u$ satisfies
\begin{equation} \label{4.1}
  \int\limits_{D} \Big(-\big(1+|\nabla u|^{2}\big)^{\alpha}+\alpha\big(1+|\nabla u|^{2}\big)^{\alpha-1}|\nabla u|^{2}\Big)z \;dxdy=0
\end{equation}  
where $z=x+iy$ is the complex coordinate on the disk $D$.
\end{lemma}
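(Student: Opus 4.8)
The plan is to derive the identity by computing the first variation of $E_\alpha$ under a special family of interior variations coming from the (non-conformal) holomorphic vector field $z\,\partial_z$ on the disk, i.e. by plugging reparametrizations of the domain into the energy and using that $u$ is critical. Concretely, I would let $X$ be a smooth vector field on $\bar D$ (to be chosen), let $\psi_t$ be the flow of $X$, and set $u_t = u \circ \psi_t$. Since $u$ is a critical point of $E_\alpha$ on $W^{1,2\alpha}(D,\partial D;N,M)$ and $\psi_t$ preserves the constraint $u_t(\partial D)\subset M$ provided $X$ is tangent to $\partial D$, we get $\frac{d}{dt}\big|_{t=0} E_\alpha(u_t) = 0$. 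The point is to write this derivative as an integral over $D$ of the integrand appearing in \eqref{4.1} contracted against data built from $X$, and then choose $X$ cleverly.

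The key computational step is the change-of-variables/first-variation formula for the $\alpha$-energy under domain reparametrization. Writing $E_\alpha(u_t) = \int_D (1+|\nabla(u\circ\psi_t)|^2)^\alpha\, dxdy$ and changing variables back, the $t$-derivative at $t=0$ produces two contributions: one from differentiating the integrand $(1+|\nabla u|^2)^\alpha$ through the pulled-back metric (this brings down a factor $\alpha(1+|\nabla u|^2)^{\alpha-1}$ times the derivative of $|\nabla u|^2_{g_t}$, which in conformal coordinates involves the traceless part of $\nabla X$, i.e. the holomorphic/antiholomorphic derivatives of the complexified field), and one from differentiating the area element $d\mu_t$ (which brings down $(1+|\nabla u|^2)^\alpha\,\mathrm{div}\,X$). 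Collecting terms, the stationarity condition becomes
\[
  \int_D \Big( -(1+|\nabla u|^2)^\alpha + \alpha(1+|\nabla u|^2)^{\alpha-1}|\nabla u|^2 \Big)\,(\mathrm{div}\,X)\,dxdy
  \;=\; \text{(terms involving the traceless part of } \nabla X),
\]
and I would choose $X$ so that the traceless-part terms vanish while $\mathrm{div}\,X$ becomes, after taking real part, the coordinate functions $x$ and $y$ (hence $z$ after combining). The natural choice is the conformal/holomorphic vector field $X = z\,\partial_z$ (and $X=iz\,\partial_z$), for which the Cauchy–Riemann structure kills the traceless term; one must check this $X$ is tangent to $\partial D$ (it is, since $z\partial_z$ generates rotations... actually $z\partial_z$ generates dilations — so I would instead use the field generating the Möbius/conformal flow fixing $\partial D$, or simply note that the relevant variation of the \emph{conformal structure} is trivial on $D$ and handle the boundary term directly, as in Sacks–Uhlenbeck \cite{SU1} and in Lemma \ref{diskvariation}'s analog there).

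The main obstacle I anticipate is exactly the boundary bookkeeping: unlike the closed-surface case, $z\partial_z$ is not tangent to $\partial D$, so either the variation must be corrected near $\partial D$ or an honest boundary integral must be shown to vanish using the free boundary condition $u_*\sim f_*$ and the natural (Neumann-type) boundary condition satisfied by critical maps of $E_\alpha$ on $W^{1,2\alpha}(D,\partial D;N,M)$. I expect that the correct statement is that the critical point equation, when tested against the real and imaginary parts of the holomorphic reparametrization, yields \eqref{4.1} with \emph{no} surviving boundary term precisely because the admissible class only constrains $u(\partial D)\subset M$ (not a parametrization of $\partial D$), so reparametrizations of $\partial D$ are allowed and the boundary contribution is itself a first-variation that vanishes. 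I would therefore organize the proof as: (1) set up $u_t = u\circ\psi_t$ for $\psi_t$ the flow of a vector field realizing multiplication-type reparametrization, checking admissibility; (2) compute $\frac{d}{dt}\big|_0 E_\alpha(u_t)$ via change of variables, isolating the divergence term and the traceless term; (3) observe the traceless term vanishes for the holomorphic choice; (4) argue the boundary term vanishes; (5) read off \eqref{4.1} from $\mathrm{div}(z\partial_z)$, combining the two real variations into the single complex identity.
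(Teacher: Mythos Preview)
Your overall strategy---test criticality of $E_\alpha$ against domain reparametrizations---is exactly right, and is what the paper does. The gap is in the choice of vector field and the resulting bookkeeping. The field $z\partial_z$ is the wrong object on two counts: it is not tangent to $\partial D$ (as you noticed, it generates dilations), and its divergence is the constant $2$, not the coordinate functions $x,y$; so it cannot produce the weight $z$ in the integrand. You mention the correct fix in passing (``the M\"obius/conformal flow fixing $\partial D$'') but then drop it and return to worrying about boundary terms for $z\partial_z$.

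The paper commits to the M\"obius route from the start and this is what makes the proof clean. One takes the explicit family of disk automorphisms
\[
   \varphi_{\beta}(z)=\frac{z-\beta}{1-\bar\beta z},\qquad \beta=\beta(t),\ \beta(0)=0,
\]
and sets $u_t=u\circ\varphi_t$. Because each $\varphi_t$ maps $(\bar D,\partial D)$ to itself, $u_t$ is automatically admissible and there is \emph{no} boundary term to analyze; the free boundary condition plays no role here. After the change of variables $w=\varphi_t(z)$ one differentiates $E_\alpha(u\circ\varphi_t)$ at $t=0$; the key computation is
\[
   \left.\frac{\partial}{\partial t}\Big(\frac{\partial w}{\partial z}\frac{\partial \bar w}{\partial \bar z}\Big)\right|_{t=0}
   = 2\bar\beta'(0)\,w + 2\beta'(0)\,\bar w,
\]
and it is this factor (not a divergence of $z\partial_z$) that supplies the weight $z$ in \eqref{4.1}. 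Since $\beta'(0)\in\mathbb{C}$ is arbitrary, the identity follows. In your language, the infinitesimal generator of this family is the holomorphic field $(-\beta'(0)+\bar\beta'(0)z^2)\partial_z$, which \emph{is} tangent to $\partial D$ and has real divergence $4\,\mathrm{Re}(\bar\beta'(0)z)$; plugging this into your div/traceless framework would also yield the result. So the missing step is simply to replace $z\partial_z$ by the M\"obius generator and observe that the boundary discussion then evaporates.
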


\begin{proof}
Writing $u=u(z,\bar{z})$, we have 
$
  |\nabla u|^{2}=|u_{x}|^{2}+|u_{y}|^{2}=4u_{z} \cdot u_{\bar{z}}
$
and 
$$
  E_{\alpha}(u)=\int_D \big(1+4u_{z} \cdot u_{\bar{z}}\big)^{\alpha}  \; \frac{i}{2} \; dz d\bar{z}.
$$ 
Given a complex number $\beta$, let
$$
  \varphi_{\beta}(z)=\frac{z-\beta}{1-\bar{\beta}z}.
$$ 
Let $\beta(t)$ be a differentiable curve in $\mathbb{C}$ with
$$
  |\beta(t)| < 1, \ \beta(0)=0.
$$
Then $\varphi_{t}=\varphi_{\beta(t)}$ is a family of automorphisms of the unit disk, which map the boundary to the boundary. Now we define a variation of $u$ by
$$
  D \stackrel{\varphi_{t}}{\longrightarrow} D \stackrel{u}{\longrightarrow} N 
$$ 
$$
  z \mapsto w=\varphi_{t}(z) \mapsto u(w,\bar{w})
$$
where
\[
       z=\varphi^{-1}_t(w)=\frac{w+\beta}{1+\bar{\beta}w}.
\]
Then we have
\[
  \frac{\partial w}{\partial z}=\frac{1-|\beta|^{2}}{(1-\bar{\beta}z)^{2}},
\quad 
  \frac{\partial z}{\partial w}=\frac{1-|\beta|^{2}}{(1+\bar{\beta}w)^{2}},
\]
and using $\beta(0)=0$, we compute
\begin{equation} \label{eq:var}
  \left.\frac{\partial}{\partial t}\Big(\frac{\partial w}{\partial z}\Big)\right|_{t=0} = 2\bar{\beta}'(0) z,
  \quad
  \left.\frac{\partial}{\partial t}\Big(\frac{\partial z}{\partial w}\Big)\right|_{t=0} = -2\bar{\beta}'(0) w.
\end{equation} 
We have:
\[
  E_{\alpha}(u \circ \varphi_{t}) 
   = \int_D \left( 1+ \frac{\partial w}{\partial z} \frac{\partial \bar{w}}{\partial \bar{z}} | \nabla u|^2 \right)^\alpha     
             \frac{\partial z}{\partial w}\frac{\partial \bar{z}}{\partial \bar{w}} \;\frac{i}{2} \; dwd\bar{w}. 
\]
We compute
\[
       \frac{\partial w}{\partial z}\frac{\partial \bar{w}}{\partial \bar{z}}
       = \frac{(1+\bar{\beta}w)^2(1+\beta \bar{w})^2}{(1-|\beta|^2)^2}, \;\;
       \mbox{ and } \;\; 
       \frac{\partial}{\partial t} 
       \left.\left(\frac{\partial w}{\partial z}\frac{\partial \bar{w}}{\partial \bar{z}}\right)\right|_{t=0}
       =2\bar{\beta}'(0) w +2\beta'(0)\bar{w}.
\]
Using this and (\ref{eq:var}), we have
\begin{align*} 
     \frac{d}{dt} E_{\alpha}(u\circ \varphi_{t})\Big|_{t=0} 
     &=\int_D  \big(1+|\nabla u|^{2}\big)^{\alpha}
     \big(-2\bar{\beta}'(0) {w} -2{\beta}'(0)\bar{w}\big)\;\frac{i}{2}\;dwd\bar{w} \\
     & \quad + \int_D \alpha |\nabla u|^{2}  \big(1+|\nabla u|^{2}\big)^{\alpha-1}
          \big(2\bar{\beta}'(0) {w} + 2{\beta}'(0)\bar{w}\big)\;\frac{i}{2}\;dwd\bar{w} \\ 
     &= {\beta}'(0) \int_D 2\Big(-\big(1+|\nabla u|^{2}\big)^{\alpha}
     +\alpha\big(1+|\nabla u|^{2}\big)^{\alpha-1}| \nabla u|^{2}\Big) \bar{z}\;dxdy \\
   & \quad +\bar{\beta}'(0)\int_D 2\Big(-\big(1+|\nabla u|^{2}\big)^{\alpha}
      +\alpha\big(1+|\nabla u|^{2}\big)^{\alpha-1}|\nabla u|^{2}   \Big){z}\;dxdy. 
\end{align*}
Since $\beta'(0)$ is arbitrary, we get (\ref{4.1}).
\end{proof}

\begin{corollary} \label{cor:4.2}
Let $u_{\alpha}$ be a sequence of critical maps of $E_{\alpha}$ for a sequence $\alpha \rightarrow 1$. \linebreak If $u_{\alpha} \rightarrow u$ in $C^{1}(\bar{D} - \{p\},N)$ where $p \in \partial D$,  and each $u_{\alpha}$ is nontrivial, then $u$ is not a constant map.
\end{corollary}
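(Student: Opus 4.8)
The plan is to argue by contradiction: assuming $u$ is a constant map, I will show that all the energy of $u_\alpha$ disappears as $\alpha\to1$, so that $u_\alpha$ becomes homotopically trivial for $\alpha$ near $1$, contradicting the hypothesis that each $u_\alpha$ is nontrivial. (As in the convergence statements above, e.g. Theorem~\ref{globalconvergence}, the $C^1$-convergence on $\bar D-\{p\}$ is understood to come with a uniform energy bound $E(u_\alpha)\le B$, which I use freely.) Write $\beta=\alpha-1$. Since $\int_D z\,dx\,dy=0$ by symmetry of the unit disk about $0$, adding this to the identity of Lemma~\ref{diskvariation} and dividing by $\beta$ gives
\[
  \int_D w_\alpha\, z\,dx\,dy=0,\qquad w_\alpha:=\phi_\beta\!\big(|\nabla u_\alpha|^2\big),\quad
  \phi_\beta(s):=\tfrac1\beta\Big(1-(1+s)^{1+\beta}+(1+\beta)\,s\,(1+s)^{\beta}\Big).
\]

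The first ingredient is an elementary pointwise estimate: for $s\ge0$ and $0<\beta<1$,
\[
  0\ \le\ s-\ln(1+s)\ \le\ \phi_\beta(s)\ \le\ s\,(1+s)^{\beta}.
\]
All three expressions vanish at $s=0$; since $\phi_\beta'(s)=(1+\beta)\,s\,(1+s)^{\beta-1}$, one checks that $\phi_\beta(s)-(s-\ln(1+s))$ has derivative $s(1+s)^{-1}\big((1+\beta)(1+s)^{\beta}-1\big)\ge0$ and $s(1+s)^{\beta}-\phi_\beta(s)$ has derivative $(1+s)^{\beta-1}>0$. In particular $w_\alpha\ge0$ and, pointwise on $D$,
\[
  |\nabla u_\alpha|^2-\ln\!\big(1+|\nabla u_\alpha|^2\big)\ \le\ w_\alpha\ \le\ |\nabla u_\alpha|^2\big(1+|\nabla u_\alpha|^2\big)^{\beta}.
\]

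Now I use $u_\alpha\to u\equiv\mathrm{const}$ in $C^1(\bar D-\{p\})$. The upper bound makes $w_\alpha\to0$ uniformly on $\bar D\setminus D_\delta(p)$ for each $\delta>0$, so $\int_{D\setminus D_\delta(p)}w_\alpha\to0$. Inserting this into $\int_D w_\alpha z\,dx\,dy=0$ and using $w_\alpha\ge0$, $|z|\le1$, $|z-p|\le\delta$ on $D_\delta(p)$ and $|p|=1$, one gets $(1-\delta)\int_{D_\delta(p)}w_\alpha\le\int_{D\setminus D_\delta(p)}w_\alpha$; with $\delta=\tfrac12$ this forces $m_\alpha:=\int_D w_\alpha\,dx\,dy\to0$. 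The lower bound then gives $E(u_\alpha)\le m_\alpha+\int_D\ln(1+|\nabla u_\alpha|^2)\,dx\,dy$, so it remains to show that the log-integral tends to $0$. Away from $p$ it does, by $C^1$-convergence; on $D_\delta(p)$ I split into $\{|\nabla u_\alpha|^2\le K\}$, where the contribution is $\le\pi\delta^2\ln(1+K)$, and $\{|\nabla u_\alpha|^2>K\}$, a set of measure $\le E(u_\alpha)/K\le B/K$ on which $\ln(1+t)\le2\sqrt{1+t}$ and Cauchy–Schwarz bound the contribution by $2(B/K)^{1/2}(\pi+B)^{1/2}$; letting $\alpha\to1$, then $\delta\to0$ and $K\to\infty$ finishes it. Hence $E(u_\alpha)\to0$; by $\epsilon$-regularity (cf. Lemma~\ref{epsilonconv}) the energy also cannot concentrate at $p$, so $u_\alpha\to u$ in $C^1(\bar D,N)$, and for $\alpha$ near $1$ the map $u_\alpha$ lands in a small geodesic ball of $N$ with $u_\alpha(\partial D)$ in a small ball of $M$, hence is homotopically trivial as a map of pairs — contradicting the hypothesis. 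Therefore $u$ is not constant.

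The step I expect to be the main obstacle is showing $\int_D\ln(1+|\nabla u_\alpha|^2)\to0$: this is where the (concentrating) energy at $p$ must be shown to contribute negligibly to the logarithmic integral, and it is the one place where the global energy bound and an honest integrability argument are needed, rather than the pointwise algebra of $\phi_\beta$ or the balancing identity. Verifying the sandwich for $\phi_\beta$ is routine but also needs care, since the argument hinges on trapping $w_\alpha$ between $|\nabla u_\alpha|^2-\ln(1+|\nabla u_\alpha|^2)$ from below (so that $m_\alpha\to0$ forces $E(u_\alpha)\to0$) and a quantity vanishing with $|\nabla u_\alpha|$ from above (which is what drives $m_\alpha\to0$ in the first place).
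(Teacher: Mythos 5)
Your proof is correct, and while it starts from the same place as the paper --- the balancing identity of Lemma \ref{diskvariation} combined with a pointwise sandwich on the integrand $\phi_\beta(|\nabla u_\alpha|^2)$ --- the endgame is genuinely different. The paper takes the imaginary part with $p=(0,1)$, splits $D$ into upper and lower half-disks, and uses the sandwich $\tfrac{\alpha}{2}(\alpha-1)s^2(1+s)^{\alpha-2}\le\beta\phi_\beta(s)\le(\alpha-1)s^2(1+s)^{\alpha-2}$ to trap the energy of a rescaled bubble at $p$ above by a quantity on $D^-$ that vanishes by the $C^1$-convergence; the contradiction is with the nontriviality of the bubble produced by blowup analysis. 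You instead use the lower bound $\phi_\beta(s)\ge s-\ln(1+s)$, deduce from the balancing that $m_\alpha=\int_D\phi_\beta(|\nabla u_\alpha|^2)\to0$, and dispose of the residual $\int_D\ln(1+|\nabla u_\alpha|^2)$ by Chebyshev plus Cauchy--Schwarz, concluding $E(u_\alpha)\to0$ and hence, via $\epsilon$-regularity and the small-image (or energy-gap) argument, that $u_\alpha$ is trivial for $\alpha$ near $1$. Your computations check out: $\phi_\beta'(s)=(1+\beta)s(1+s)^{\beta-1}$, the two monotonicity comparisons, and the estimate $(1-\delta)\int_{D_\delta(p)}w_\alpha\le\int_{D\setminus D_\delta(p)}w_\alpha$ are all valid. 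What your route buys is the complete avoidance of the blowup/rescaling machinery of \cite{F}; the price is that you must invoke the uniform energy bound $E(u_\alpha)\le B$ (absent from the corollary's statement, but available in its only application, Lemma \ref{split}, and equally implicit in the paper's own blowup argument) and must read ``nontrivial'' as nonconstant or homotopically nontrivial, which is indeed what the corollary means in context.
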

\begin{proof}
From (\ref{4.1}), taking the imaginary part, and using the fact that 
$\int_{D} y \;dxdy=0$, we have
$$
  \int\limits_{D} \Big(-\big(1+|\nabla u_\alpha|^{2}\big)^{\alpha}+1+\alpha\big(1+|\nabla u_\alpha|^{2}\big)^{\alpha-1}|\nabla u_\alpha|^{2}\Big)\,y \;dxdy=0.
$$
Note that the integrand is similar to that in the variation formula for the sphere derived in Sacks-Uhlenbeck (Lemma 5.3, page 20, \cite{SU1}). Thus by the same argument, we have for $1 \leq \alpha \leq 2$,
\begin{equation*} 
          \frac{\alpha}{2}(\alpha-1)|\nabla u_\alpha|^{4}
          \leq \frac{-\big(1+|\nabla u_\alpha|^{2}\big)^{\alpha}
          +1+\alpha\big(1+|\nabla u_\alpha|^{2}\big)^{\alpha-1}|\nabla u_\alpha|^{2}} 
          {\big(1+|\nabla u_\alpha|^{2}\big)^{\alpha -2}}
         \leq \big (\alpha-1)|\nabla u_\alpha|^{4}.       
\end{equation*}

Without loss of generality, we can assume $p$ is the point $(0,1) \in \partial D \subset \mathbb{R}^2$. Dividing $D$ into the upper half disk $D^+$ and the lower half disk $D^-$, we have 
\[
     \frac{\alpha}{2}\int\limits_{D^{+}}
     \big(1+|\nabla u_{\alpha}|^{2}\big)^{\alpha -2}|\nabla u_{\alpha}|^{4} \,y\;dxdy 
     \leq - \int\limits_{D^{-}}
     \big(1+|\nabla u_{\alpha}|^{2}\big)^{\alpha -2}|\nabla u_{\alpha}|^{4} \,y\;dxdy.
\] 
Assume $u$ is a constant map. 
Then $u_\alpha$ cannot converge to $u$ in $C^1(D,N)$ (Theorem 1.8 in \cite{F}).
Therefore $p$ is a blowup point: that is (by Lemma 1.16 and p. 957 in \cite{F}),
\[
       b_\alpha=\max_{z\in D} |\nabla u_\alpha(z) | =|\nabla u_\alpha(z_\alpha)| \rightarrow \infty
\]
where  $\lim_{\alpha \rightarrow 1} z_\alpha =p$. Consider the rescaled maps
$\tilde{u}_\alpha(z)=u_\alpha(z_\alpha+b_\alpha^{-1}z)$. As $\alpha \rightarrow 1$, 
the domains of $\tilde{u}_\alpha$ exhaust either the whole plane or a half plane, and (a subsequence) $\{\tilde{u}_{\alpha}\}$ converges in $C^1$ on compact subsets to a nontrivial harmonic map $\tilde{u}$.
If $D_R(0)$ denotes the disk of radius $R$ centered at the origin in the plane or the half plane, then we have
\begin{align*}
     \frac{1}{4}E(\tilde{u}|_{D_R(0)})  
     & \leq  \lim\limits_{\alpha \rightarrow 1} \ 
          \frac{1}{4} \int\limits_{D_R(0)}|\nabla\tilde{u}_{\alpha}|^{2} \,dxdy \\
    & =  \lim\limits_{\alpha \rightarrow 1} \ 
          \frac{1}{4} \int\limits_{D_{R/b_\alpha}(z_\alpha)}|\nabla u_{\alpha}|^{2} \,dxdy \\     
    & =  \lim\limits_{\alpha \rightarrow 1} \ 
         \frac{1}{4} \int\limits_{D_{R/b_\alpha}(z_\alpha)}\Big(|\nabla u_{\alpha}|^{2}
    -\frac{|\nabla u_{\alpha}|^{2}}{1+|\nabla u_{\alpha}|^{2}}\Big) \,dxdy\\    
   & =  \lim\limits_{\alpha \rightarrow 1} \ \frac{1}{4} \int\limits_{D_{R/b_\alpha}(z_\alpha)}
    \frac{|\nabla u_{\alpha}|^{2}}{1+|\nabla u_{\alpha}|^{2}} \cdot |\nabla u_{\alpha}|^{2} \,dxdy\\     
   & \leq  \lim\limits_{\alpha \rightarrow 1} \frac{\alpha}{2} \int\limits_{D_{R/b_\alpha}(z_\alpha)}
      \big(1+|\nabla  u_{\alpha}|^{2}\big)^{\alpha-1}
     \frac{|\nabla u_{\alpha}|^{2}}{1+|\nabla u_{\alpha}|^{2}} \cdot |\nabla u_{\alpha}|^{2} \, y \,dxdy\\ 
    & =  \lim\limits_{\alpha \rightarrow 1}\frac{\alpha}{2} \int\limits_{D_{R/b_\alpha}(z_\alpha)}
    \big(1+|\nabla u_{\alpha}|^{2}\big)^{\alpha-2} |\nabla u_{\alpha}|^{4} \, y \,dxdy \\   
    & \leq  \lim\limits_{\alpha \rightarrow 1} \frac{\alpha}{2}
    \int\limits_{D^{+}}  \big(1+|\nabla u_{\alpha}|^{2}\big)^{\alpha -2}
    |\nabla u_{\alpha}|^{4}  \, y\,dxdy \\   
    & \leq \lim\limits_{\alpha \rightarrow 1} 
      -\int\limits_{D^{-}}  \big(1+|\nabla u_{\alpha}|^{2}\big)^{\alpha -2}
      |\nabla u_{\alpha}|^{4}  \, y\,dxdy \\        
    & =0            
\end{align*}
where the first equality is by the conformal invariance of the energy functional,
the second equality follows since $b_{\alpha} \rightarrow \infty$,  
in the second inequality we have used $y > \frac{1}{2}$ on $D_{R/b_\alpha(z_\alpha)}$, and the last equality follows
since $u$ is a constant map and $u_\alpha \rightarrow u$ in $C^1$ on $D^-$, so $|\nabla u_{\alpha}|^{2} \rightarrow 0$ uniformly. This contradicts the fact that $\tilde{u}$ is nontrivial. Therefore $u$ is not a constant map.
\end{proof}

Now we come back to the specific setting of minimizing disks. Given a basepoint $x_0 \in M$, let
\[
      i_{*}: \pi_{1}(M,x_0) \rightarrow \pi_{1}(N,x_0)
\]
be the homomorphism induced by the inclusion $i$ of $M$ in $N$. Recall that two elements $\gamma$ and $\gamma'$ in $\pi_1(M,x_0)$ determine the same free homotopy class of closed curves in $M$ if and only if they belong to the same orbit $\pi_1(M,x_0)\gamma=\pi_1(M,x_0)\gamma'$ under the usual action of $\pi_1(M,x_0)$ on $\pi_1(M,x_0)$. That is, the set of free homotopy classes of closed curves in $M$ is in one-to-one correspondence with the set of orbits $\pi_1(M,x_0)\gamma \subset \pi_1(M,x_0)$  (for further details see \cite{SU1} p.19).
Given an element $\gamma$ in $\ker i_{*}$, let $\Gamma$ be its associated free homotopy class. Let
$$
  W_{\Gamma}=\{u \in W^{1,\infty}(D,\partial D;N,M): [u(\partial D)] = \Gamma \},
$$  
where we use the notation $[u(\partial D)]$ for the free homotopy class of $u(\partial D)$, and define
$$  
  \mathcal{E}(\Gamma)
  =\min \ \{E(u): u\in W_{\Gamma}\}
  =\lim_{\alpha\rightarrow 1}\min \ \{\tilde{E}_{\alpha}(u): u\in W_{\Gamma}\}
$$
where $\tilde{E}_{\alpha}(u)=\int((1+|\nabla u|^{2})^{\alpha}-1)d\mu$. Note that $ \mathcal{E}(\Gamma) =0$ if and only if $\Gamma$ is trivial, and $\mathcal{E}( \Gamma) > \epsilon_0$ otherwise (\cite{F} Theorem 1.8). 

\begin{lemma} \label{split}
Let $\gamma \in \ker i_{*}$ and let $\Gamma=\pi_1(M,x_0)\gamma$ be its associated free homotopy class. Then either $\Gamma$ can be represented by the boundary of an area minimizing disk solving the free boundary problem, or for any $\delta > 0$ there exist nontrivial free homotopy classes $\Gamma_{1}=\pi_1(M,x_0)\gamma_1, \, \Gamma_{2}=\pi_1(M,x_0)\gamma_1$ where $\gamma_1, \, \gamma_2 \in \ker i_{*}$, such that
$$
  \pi_1(M,x_0)\gamma \subset \pi_1(M,x_0)\gamma_1+\pi_1(M,x_0)\gamma_2, \ \ \ \mathcal{E}(\Gamma_{1}) + \mathcal{E}(\Gamma_{2}) < \mathcal{E}(\Gamma)+\delta.
$$
\end{lemma}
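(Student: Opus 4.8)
The plan is to argue by contradiction with the conclusion "$\Gamma$ can be represented by the boundary of an area minimizing disk." Take an $\tilde E_\alpha$-minimizing sequence for the class $\Gamma$: for each $\alpha$ close to $1$, let $u_\alpha \in W_\Gamma$ realize (up to $o(1)$) the infimum $\min\{\tilde E_\alpha(u):u\in W_\Gamma\}$; such minimizers exist by the argument of Proposition \ref{existence-alpha} applied to the disk with the free homotopy constraint on the boundary (the constraint $[u(\partial D)]=\Gamma$ is preserved under $C^0$-convergence, and is open, so it passes to the uniform limit). By construction $\tilde E_\alpha(u_\alpha)\to \mathcal E(\Gamma)$, in particular $E(u_\alpha)\le B$ for some uniform $B$. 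Apply Theorem \ref{globalconvergence} (the disk case of the convergence theorem of \cite{SU1}, \cite{F}): there is a subsequence and a finite set $\{z_1,\dots,z_l\}$ with $u_\alpha\to u$ in $C^1(\bar D-\{z_1,\dots,z_l\},N)$, where $u:(D,\partial D)\to(N,M)$ is a smooth harmonic map solving the free boundary problem, with $E(u)\le \mathcal E(\Gamma)$. If $l=0$, the convergence is $C^1$ up to the boundary, so $[u(\partial D)]=\Gamma$ (the boundary curves converge in $C^1$), hence $u\in W_\Gamma$ and $E(u)=\mathcal E(\Gamma)$: then $u$ is conformal (Theorem \ref{thm3.2}, or directly since any free boundary harmonic disk is conformal), so $\Gamma$ is represented by the boundary of the area minimizing disk $u$, and we are in the first alternative.

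So assume $l\ge 1$. By the same energy-comparison/modification argument as in Theorem \ref{homotopyclass} (cutting off $u_\alpha$ against $u$ on small disks $D_\rho(z_i)$ via \eqref{eq:tildeu}, with the boundary correction on $\Omega_\alpha$ when $z_i\in\partial D$), if there were a blowup point $z_i$ the energy $E(u_\alpha|_{D_\rho(z_i)})$ would not be killed only if the "bubble" there carries definite energy; more precisely, a standard neck analysis shows that at each $z_i$ either no energy concentrates (contradicting that $z_i$ is a genuine blowup point) or a nontrivial bubble splits off. Now distinguish: an interior blowup point would give a nontrivial harmonic sphere, impossible under $\pi_2(N)=0$ — but we are NOT assuming that here, so instead I would observe that an interior bubble does not change the boundary homotopy class, so it can only occur if $l\ge 1$ with all bubbling interior, and then $[u(\partial D)]=\Gamma$ still, handing us the first alternative again (the interior bubble just adds energy, so $u$ is still a minimizer of $E$ in $W_\Gamma$ once we discard the lost bubble energy — this needs the "no neck energy" statement, or simply: $\mathcal E(\Gamma)=E(u)+\sum(\text{bubble energies})\ge E(u)\ge \mathcal E(\Gamma)$ forces the bubbles to have zero energy). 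Hence the only substantive case is a \emph{boundary} blowup point $p=z_i\in\partial D$ at which energy concentrates.

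At a boundary blowup point, rescaling as in Corollary \ref{cor:4.2} produces a nontrivial harmonic map on a half-plane, i.e. after conformal change a nontrivial free boundary harmonic disk $v_1:(D,\partial D)\to(N,M)$ with $[v_1(\partial D)]=\Gamma_1$ a nontrivial free homotopy class in $M$; and since the $u_\alpha$ land in the class $\Gamma$ with $\gamma\in\ker i_*$, and the bubble is extracted by composing $u_\alpha$ with Möbius maps fixing $\partial D$, the complementary piece $u$ carries the class $\Gamma_2$ with, in $\pi_1(M,x_0)$-orbit notation, $\pi_1(M,x_0)\gamma\subset \pi_1(M,x_0)\gamma_1+\pi_1(M,x_0)\gamma_2$ (the boundary circle splits, along the neck, as a concatenation of a loop freely homotopic to $\partial v_1$ and a loop freely homotopic to $\partial u$, conjugated by paths in $M$); moreover both $\gamma_1,\gamma_2$ lie in $\ker i_*$ because $i_*(\gamma)=0$ and $i_*\gamma_1, i_*\gamma_2$ are the boundary classes of actual disks $v_1,u$ in $N$, hence null-homotopic in $N$ — wait, more carefully: $\gamma_1$ bounds a disk in $N$ (the bubble, whose boundary lies in $M$), so $i_*\gamma_1=0$, and then $i_*\gamma_2=i_*\gamma-i_*\gamma_1=0$ up to conjugation, so $\gamma_2\in\ker i_*$ too. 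Finally, the energy estimate: $\mathcal E(\Gamma_1)\le E(v_1)$ and $\mathcal E(\Gamma_2)\le E(u)$, while $E(v_1)+E(u)\le \lim \tilde E_\alpha(u_\alpha)=\mathcal E(\Gamma)$ by the lower-semicontinuity/energy-splitting identity from Theorem \ref{globalconvergence} together with the rescaling computation (the bubble captures at most the lost energy), which is already $\le \mathcal E(\Gamma)<\mathcal E(\Gamma)+\delta$ for every $\delta>0$; if one insists on strict inequality with a genuine $\delta$, note either bubble plus base together exhaust $\mathcal E(\Gamma)$ so both are strictly smaller than $\mathcal E(\Gamma)$ (each bubble has energy $>\epsilon_0>0$ by \cite{F} Theorem 1.8), giving $\mathcal E(\Gamma_i)<\mathcal E(\Gamma)$ and hence the stated inequality trivially.

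The main obstacle is the \textbf{precise bookkeeping of the boundary homotopy class across the neck}: one must show that in the limit the boundary curve $u_\alpha(\partial D)$ decomposes, up to free homotopy in $M$, as the product of the boundary curve of the bubble and the boundary curve of the base map, with the correct conjugation by a path in $M$ joining the two basepoints — this is exactly the free-homotopy analog of the "the $\pi_1$-element splits as a product" step in Schoen–Yau / Sacks–Uhlenbeck, but here it happens on the boundary and for the $\pi_1(M)$-orbit, so it requires Corollary \ref{cor:4.2} to guarantee the bubble is nonconstant (so $\Gamma_1$ is genuinely nontrivial) together with a careful description of the annular neck region $\{b_\alpha^{-1}\rho \le |z-z_\alpha|\le \rho\}\cap \bar D$ whose $u_\alpha$-image has small energy and therefore small diameter on the relevant scales, pinning the concatenation relation. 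Everything else — existence of $u_\alpha$, extraction of $u$, the energy inequalities — is a direct citation of Proposition \ref{existence-alpha}, Theorem \ref{globalconvergence}, Corollary \ref{cor:4.2}, and the $\epsilon_0$-lower bound of \cite{F}.
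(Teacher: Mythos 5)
Your overall skeleton matches the paper's: minimize $E_\alpha$ in $W_\Gamma$, pass to a limit away from finitely many points, rule out interior blow-up by noting that the interior modification \eqref{eq:tildeu} leaves the boundary trace (hence membership in $W_\Gamma$) unchanged, and then extract the splitting from a boundary blow-up point. But the step you yourself flag as ``the main obstacle'' --- showing that the boundary curve decomposes, up to free homotopy in $M$, as a product of the bubble's boundary class and the base map's boundary class --- is precisely the content of the lemma, and you do not prove it. Your route (rescale to extract a free boundary harmonic disk $v_1$, then track the class across the neck) would require an energy identity and a no-neck/small-diameter statement for the annular regions $\{b_\alpha^{-1}R\le|z-z_\alpha|\le\rho\}$ in the free boundary setting, none of which is established here or in the cited references. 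The paper avoids this entirely: it never extracts the bubble as a limit. Instead it chooses a half-disk neighborhood $B$ of $p$ and a conformal reflection $h:D-\bar B\to B$ fixing $\partial B\cap D$, and defines two genuine disk maps $u_\alpha^1$ (equal to $u_\alpha$ off $B$, to the interpolation $\widehat u_\alpha$ on $B$) and $u_\alpha^2$ (equal to $\widehat u_\alpha\circ h$ off $B$, to $u_\alpha$ on $B$). The containment $\pi_1(M,x_0)\gamma\subset\pi_1(M,x_0)\gamma_1+\pi_1(M,x_0)\gamma_2$ is then immediate from the construction, and conformal invariance of $E$ under $h$ gives the energy bookkeeping $\lim\tilde E_\alpha(u_\alpha^1)+\lim\tilde E_\alpha(u_\alpha^2)=\lim\tilde E_\alpha(u_\alpha)+2E(u|_B)$, whence the $\delta$-inequality after choosing $|B|$ small. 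This cut-and-reflect device is the idea missing from your proposal.

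Two further problems with your version. First, you set $\Gamma_2=[u(\partial D)]$ where $u$ is the limit map; if $u$ is constant (which can happen), your $\Gamma_2$ is trivial and the lemma fails as you have stated it. The paper's Corollary \ref{cor:4.2} is used exactly here: it shows that if $u$ is constant there must be a \emph{second} boundary blow-up point outside $B$, so that $u_\alpha^1$ still carries energy at least $\epsilon$ and $\Gamma_1$ is forced to be nontrivial by the orbit arithmetic ($\Gamma_2\ne\Gamma$ plus $\Gamma\subset\Gamma_1+\Gamma_2$). You instead cite Corollary \ref{cor:4.2} as guaranteeing ``the bubble is nonconstant,'' which is not what it says --- it concerns the limit $u$, not the rescaled limit. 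Second, at a boundary blow-up point the rescaled limit need not be a half-plane (free boundary disk) bubble; it can be an entire harmonic sphere if $b_\alpha\,\mathrm{dist}(z_\alpha,\partial D)\to\infty$, in which case your $v_1$ has no boundary class and your splitting statement is vacuous. The paper's construction is insensitive to the nature of the concentration: it only uses that energy at least $\epsilon$ concentrates in $B$, via Lemma \ref{epsilonconv}.
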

\begin{proof}
Since $\gamma \in \ker i_*$ there exists $f: (D, \partial D) \rightarrow (N,M)$ such that $[f(\partial D)] =\Gamma$. As in the proof of Proposition \ref{existence-alpha}, there exists a minimizing map $u_\alpha$ of $E_\alpha$ on $W_\Gamma$. By Theorem \ref{globalconvergence} there exists a sequence $\alpha \rightarrow 1$ such that $u_{\alpha} \rightarrow u$ in $C^{1}$ on $D$ minus a finite set of points, and $u: (D,\partial D) \rightarrow (N,M)$ is a (possibly trivial) harmonic map satisfying the free boundary condition. If the set of points where the convergence fails is empty, then $u$ is nontrivial, and hence is an area minimizing disk solving the free boundary problem with $[u(\partial D)]=\Gamma$. Otherwise there exists a point $p$ at which $u_{\alpha}$ fails to converge to $u$ in $C^{1}$. Note that $p$ cannot be an interior point. If $p$ is an interior point, then as in the proof of Theorem \ref{homotopyclass} we can define a modified map $\widehat{u}_\alpha$ by (\ref{eq:tildeu}) with $\widehat{u}_\alpha|_{\partial D} =u_\alpha|_{\partial D}$, so $\widehat{u}_\alpha \in W_\Gamma$ and by the same argument as in the the proof of Theorem \ref{homotopyclass} we have ${u}_\alpha \rightarrow u$ in $C^1(D_\rho(p),N)$. Therefore, $p \in \partial D$.

Now observe that given $\rho >0$, we can find a neighborhood $B$ of $p$ in $\bar{D}$, with $|B|<\rho$, that contains no other points where the convergence fails, and  such that there is a conformal diffeomorphism $h: D - \bar{B} \rightarrow B$ leaving $\partial B \cap D$ fixed. The existence of $B$ and $h$ can be seen in the following way. Let $\varphi: D  \rightarrow H$ be a conformal map from the open disk $D$ to the upper half plane $H$ such that $p$ is mapped to the origin and two nearby points $q$ and $q' \in \partial D$ on either side of $p$ are mapped to $1$ and $-1$. We may choose $q$ and $q'$ sufficiently close to $p$ so that $B:=\varphi^{-1}(D^+)$, where $D^+=D\cap \bar{H}$, has area less than $ \rho$ and contains no other points where the convergence fails. Let $S: \bar{D}^+ \rightarrow H-D^+$ be the conformal map $S(z)={1}/{\bar{z}}$, which is the identity map on the half circle. Then we may take $h=\varphi^{-1} \circ S \circ \varphi$.

Using the construction from Theorem \ref{homotopyclass}, we can define a map $\widehat{u}_\alpha$ that  agrees with $u_\alpha$ outside $B$ and with $u$ on neighborhood of $p$ in $B$, and so that 
$\lim_{\alpha \rightarrow 1} \tilde{E}_{\alpha}(\widehat{u}_{\alpha}|_B) = E(u|_B)$.
Now define
$$
  \begin{array}{lll}
  u_{\alpha}^{1} & = & \left \{
  \begin{array}{ll}
  u_{\alpha} & \ \ \ \ \ \text{on} \ D-B\\
  \widehat{u}_{\alpha} & \ \ \ \ \ \text{on} \ B
  \end{array}
  \right.\\ 
  & & \\
  u_{\alpha}^{2} & = & \left \{
  \begin{array}{ll}
  \widehat{u}_{\alpha}\circ h & \ \text{on} \ D-B\\
  u_{\alpha} & \ \text{on} \ B.
  \end{array}
  \right.
  \end{array}
$$

Let $\Gamma_{1}$ and $\Gamma_{2}$ be the free homotopy classes of $u_{\alpha}^{1}(\partial D)$ and $u_{\alpha}^{2}(\partial D)$ respectively. Then $\Gamma \subset \Gamma_{1}+\Gamma_{2}$. By the conformality of $h$, we have
\begin{align*}
  \lim_{\alpha \rightarrow 1} \tilde{E}_{\alpha}(u_{\alpha}^{1}) 
  & =  \lim_{\alpha \rightarrow 1} \tilde{E}_{\alpha}( u_{\alpha}|_{D-B})
            +E(u |_B) \\
  \lim_{\alpha \rightarrow 1} \tilde{E}_{\alpha}(u_{\alpha}^{2}) 
  & =  \lim_{\alpha \rightarrow 1} \tilde{E}_{\alpha}( u_{\alpha} |_B)
            +E(u |_B).
\end{align*}
Choose $\rho$ sufficiently small so that $E(u|_B) \leq \|u\|^2_{1,\infty} |B|  < \|u\|^2_{1,\infty} \rho < \delta/6$. Then if $\alpha$ is sufficiently close to 1, we have
\begin{align*}
  \tilde{E}_{\alpha}(u_{\alpha}^{1}) 
  & \leq  \tilde{E}_{\alpha}( u_{\alpha} |_{D-B})+ \frac{\delta}{3} \\
  \tilde{E}_{\alpha}(u_{\alpha}^{2}) 
  & \leq  \tilde{E}_{\alpha}( u_{\alpha} |_B)+\frac{\delta}{3},
\end{align*}
and
\begin{equation} \label{eq:sum}
  \mathcal{E}(\Gamma_{1}) + \mathcal{E}(\Gamma_{2}) 
  \leq \tilde{E}_{\alpha}(u_{\alpha}^{1})+ \tilde{E}_{\alpha}(u_{\alpha}^{2})       
  \leq \tilde{E}_{\alpha}(u_{\alpha}) + \frac{2\delta}{3} 
  <  \mathcal{E}(\Gamma)+\delta,
\end{equation}
where the last inequality follows since $\{u_\alpha\}$ is a minimizing sequence for $\mathcal{E}(\Gamma)$.
We may assume $\delta < \frac{1}{2}\min\{\epsilon,\epsilon_0\}$. By Lemma \ref{epsilonconv},
$
        \tilde{E}_{\alpha}(u_{\alpha}^{2}) 
        \geq \tilde{E}_{\alpha}(u_{\alpha}|_B) 
        \geq E(u_\alpha|_B)
        \geq \epsilon
$ for $\alpha$ close to 1, 
and so
\[ 
        \mathcal{E}(\Gamma_{1}) \leq \tilde{E}_{\alpha}(u_{\alpha}^{1}) 
        \leq \mathcal{E}(\Gamma)+\delta-\epsilon
        <\mathcal{E}(\Gamma).
\]
Therefore $\Gamma_{1} \neq \Gamma$ and $\Gamma_{2}$ is nontrivial. It remains to show that $\Gamma_1$ is nontrivial. For $\alpha$ sufficiently close to 1, we have
$$
  \tilde{E}_{\alpha}(u_{\alpha}^{1})
  \geq \tilde{E}_{\alpha}(u_{\alpha} |_{D-B}) 
  \geq E(u|_{D-B})-\frac{\delta}{6} 
  > E(u) - \frac{\delta}{3}.
$$
If $u$ is nontrivial then $E(u) \geq \epsilon_0$ (\cite{F} Theorem 1.8), and so  
\[
        \tilde{E}_{\alpha}(u_{\alpha}^{1})>E(u) -\frac{\delta}{3} \geq \epsilon_0-\frac{\delta}{3}>\delta.
\]
If $u$ is trivial, by Corollary \ref{cor:4.2} there must be a second point $p' \neq p$ where the convergence $u_\alpha \rightarrow u$ fails, and $p' \in \partial D-B$. Then by Lemma \ref{epsilonconv}, $\tilde{E}_\alpha(u^1_\alpha) \geq \tilde{E}_\alpha(u_\alpha|_{D-B}) \geq {E}(u_\alpha|_{D-B}) \geq \epsilon$ for $\alpha$ close to 1. In either case, we have $\tilde{E}_{\alpha}(u_{\alpha}^{1}) > \delta$, and then by equation (\ref{eq:sum}),
\[
        \mathcal{E}(\Gamma_{2}) \leq \tilde{E}_{\alpha}(u_{\alpha}^{2}) 
        < \mathcal{E}(\Gamma)+\delta-\tilde{E}(u^1_\alpha)
        < \mathcal{E}(\Gamma)+\delta-\delta
        =\mathcal{E}(\Gamma).
\]
Therefore, $\Gamma_2 \neq \Gamma$ and $\Gamma_{1}$ is nontrivial.
\end{proof}

\begin{theorem}
There exists a set of free homotopy classes $\{ \Gamma_j \}$ of closed curves in $M$ such that the elements $\{ \gamma \in \Gamma_j \}$ form a generating set for $\ker i_*$ acted on by $\pi_1(M,x_0)$, and each $\Gamma_j$ can be represented by the boundary of an area minimizing disk that solves the free boundary problem.
\end{theorem}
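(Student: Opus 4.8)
The plan is to run the no-infinite-descent argument of Sacks--Uhlenbeck \cite{SU1}, with Lemma \ref{split} supplying the key dichotomy in place of their bubbling analysis. First I would set up the bookkeeping: call a free homotopy class $\Gamma$ of closed curves in $M$ \emph{representable} if $\Gamma=[u(\partial D)]$ for some area minimizing disk $u:(D,\partial D)\to(N,M)$ solving the free boundary problem --- note that then $u(\partial D)$ bounds the disk $u(D)$ in $N$, so every representable class has the form $\pi_1(M,x_0)\gamma$ with $\gamma\in\ker i_*$. Let $\mathcal S$ be the set of all representable classes and let $\mathcal H\le\pi_1(M,x_0)$ be the subgroup generated by $\bigcup_{\Gamma\in\mathcal S}\Gamma$; since $\mathcal H$ is generated by a union of conjugacy classes it is normal, the generating set $\bigcup_{\Gamma\in\mathcal S}\Gamma$ is $\pi_1(M,x_0)$-invariant, and $\mathcal H\subseteq\ker i_*$. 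Taking $\{\Gamma_j\}$ to be an enumeration of $\mathcal S$, I would observe that the theorem is exactly the equality $\mathcal H=\ker i_*$.

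To prove this equality I would argue by contradiction, assuming $\mathcal H\subsetneq\ker i_*$. Any $\gamma\in\ker i_*\setminus\mathcal H$ is nontrivial (the identity lies in $\mathcal H$), so its class $\Gamma=\pi_1(M,x_0)\gamma$ satisfies $\mathcal E(\Gamma)>\epsilon_0>0$ by the discussion preceding Lemma \ref{split}, and $\mathcal E(\Gamma)<\infty$ because $\gamma\in\ker i_*$ forces $W_\Gamma\ne\emptyset$. I would then set $d=\inf\{\mathcal E(\pi_1(M,x_0)\gamma):\gamma\in\ker i_*\setminus\mathcal H\}\in[\epsilon_0,\infty)$, fix $\eta$ with $0<\eta<\epsilon_0/2$, and choose $\gamma\in\ker i_*\setminus\mathcal H$ with $\mathcal E(\Gamma)<d+\eta$. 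The key step is then to feed this $\gamma$ into Lemma \ref{split}: since $\gamma\notin\mathcal H$ and $\mathcal H$ contains every representable class, $\Gamma$ is not representable, so the lemma (with $\delta=\eta$) produces nontrivial classes $\Gamma_1=\pi_1(M,x_0)\gamma_1$, $\Gamma_2=\pi_1(M,x_0)\gamma_2$ with $\gamma_1,\gamma_2\in\ker i_*$, $\pi_1(M,x_0)\gamma\subset\pi_1(M,x_0)\gamma_1+\pi_1(M,x_0)\gamma_2$, and $\mathcal E(\Gamma_1)+\mathcal E(\Gamma_2)<\mathcal E(\Gamma)+\eta<d+2\eta$. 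Since the inclusion puts $\gamma$ in the subgroup generated by the conjugates of $\gamma_1$ and $\gamma_2$ and $\mathcal H$ is normal, at least one of $\gamma_1,\gamma_2$ is not in $\mathcal H$; relabeling, $\gamma_1\in\ker i_*\setminus\mathcal H$, so $\mathcal E(\Gamma_1)\ge d$. But $\Gamma_2$ is nontrivial, so $\mathcal E(\Gamma_2)>\epsilon_0$, giving $\mathcal E(\Gamma_1)<d+2\eta-\epsilon_0<d$ --- a contradiction. Hence $\mathcal H=\ker i_*$.

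I expect the main point requiring care to be the group-theoretic bookkeeping rather than any estimate: one must verify that the relation $\pi_1(M,x_0)\gamma\subset\pi_1(M,x_0)\gamma_1+\pi_1(M,x_0)\gamma_2$ coming from the geometric splitting in Lemma \ref{split} really does place $\gamma$ in the normal subgroup generated by $\gamma_1$ and $\gamma_2$, and that the translation between based loops and free homotopy classes (the context in which the $\pi_1(M,x_0)$-action appears) is handled consistently. By contrast, no iteration of the splitting has to be controlled: the uniform energy gap $\epsilon_0$ for nontrivial classes, combined with the definition of $d$, lets a single application of Lemma \ref{split} yield the contradiction, so there is no genuine infinite descent to rule out.
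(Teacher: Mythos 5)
Your proposal is correct and follows essentially the same route as the paper: define the subgroup generated by the representable free homotopy classes, assume it is proper, pick a class of near-infimal energy among those not contained in it, apply Lemma \ref{split} once, and use the uniform gap $\epsilon_0$ for nontrivial classes to contradict the choice of infimum. The only difference is cosmetic --- the paper concludes that both $\Gamma_1,\Gamma_2$ lie in the subgroup because their energies drop below the infimum, whereas you place the contradiction on the one factor that must lie outside; these are the same argument.
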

\begin{proof}
Let $\{ \Gamma_j \}$ be the free homotopy classes that can be represented by the boundary of an area minimizing disk that solves the free boundary problem. 
Let $P \subset \ker i_*$ be the subgroup generated by $\gamma \in \Gamma_j$. Suppose $P$ is a proper subgroup. 
Let $I =  \inf \mathcal{E} (\Gamma)$  over all free homotopy classes $\Gamma$ with elements $\gamma \in \Gamma$, $\gamma \notin P$.
Then there exists $\Gamma$ such that $\mathcal{E}(\Gamma) <I+ \epsilon_0/2$.

By assumption, $\Gamma$ cannot be represented by the boundary of an area minimizing disk that solves the free boundary problem, and so by Lemma \ref{split} there exist nontrivial $\Gamma_1$ and $\Gamma_2$ with 
$\pi_1(M,x_0)\gamma \subset \pi_1(M,x_0)\gamma_1 + \pi_1(M,x_0)\gamma_2$ and 
$\mathcal{E}(\Gamma_{1}) + \mathcal{E}(\Gamma_{2}) < \mathcal{E}(\Gamma)+\epsilon_0/2$.
Since $\Gamma_1$ and $\Gamma_2$ are nontrivial,  
$\mathcal{E}(\Gamma_j) \geq \epsilon_0$ for $j=1,\,2$. This implies 
$\mathcal{E}(\Gamma_j) < \mathcal{E}(\Gamma) - \epsilon_0 /2 <I$.
Therefore, by assumption the sets $\pi_1(M,x_0)\gamma_j$ are both in $P$, and so
\[
       \pi_1(M,x_0)\gamma \subset \pi_1(M,x_0)\gamma_1 + \pi_1(M,x_0)\gamma_2 \subset P,
\]
a contradiction. Therefore $P=\ker i_*$, and so the elements $\{ \gamma \in \Gamma_j \}$ form a generating set for $\ker i_*$ acted on by $\pi_1(M,x_0)$, such that each $\Gamma_j$ can be represented by the boundary of an area minimizing disk that solves the free boundary problem.
\end{proof}

\section{Topology of minimal surfaces of low index}

Let $N$ be a compact $3$-manifold with smooth boundary $\partial N$. Suppose $\Sigma$ is a compact orientable two-sided minimal surface in $N$ with boundary $\partial \Sigma$ in $\partial N$ solving the free boundary problem $(\Sigma, \partial\Sigma)\rightarrow (N,\partial N)$. We will investigate controlling the genus and the number of boundary components of $\Sigma$ for stable and index 1 minimal surfaces, under certain curvature and boundary assumptions on $N$.

Let $A$ denote the second fundamental form, and $\nu$ denote the unit normal vector field of $\Sigma$ in $N$. Let $\eta$ denote the outward unit conormal of $\Sigma$ and $T$ the unit tangent vector along $\partial \Sigma$. The index form is the quadratic form 
$$
      I(f,f)= \int_{\Sigma} \big(\left|\nabla f\right|^{2}-\left(Ric(\nu) 
         + |A|^{2}\right)f^{2}\big) \, d\mu
         +\int_{\partial \Sigma}\left\langle \nabla_{\nu}\nu,\eta \right\rangle f^{2} \, ds
$$ 
for any normal variational vector field $f\nu$. The index of $\Sigma$ is defined as the number of negative eigenvalues of the associated bilinear form. 
A function $f \in W^{1,2}(\Sigma,\mathbb{R})$ is an eigenfunction of the index form with eigenvalue $\lambda$  if $I(f,g)=\lambda \langle f,g \rangle_{L^2}$ for all $g \in W^{1,2}(\Sigma,\mathbb{R})$: 
$$
    \int_{\Sigma} \left( \nabla f \cdot \nabla g -(Ric(\nu)+|A|^{2})fg \right) \, d\mu 
    + \int_{\partial \Sigma}\left\langle \nabla_{\nu}\nu,\eta\right\rangle fg \, ds
    =\lambda \int_{\Sigma}fg \, d\mu.
$$ 
Integrating by parts gives
$$
     -\int_{\Sigma}\big(\Delta f + (Ric(\nu)+|A|^{2})f + \lambda f \big)g \; d\mu
     + \int_{\partial \Sigma} \big(\frac{\partial f}{\partial \eta}
     +\left\langle \nabla_{\nu}\nu,\eta \right\rangle f\big)g \; ds=0.
$$ 
Equivalently $f$ solves the following Robin-type boundary value problem: 
$$
  \left\{ \begin{array}{ll}
  \Delta f + \big(Ric(\nu)+|A|^{2}\big)f =- \lambda f  & \text{in} \ \Sigma\\ 
 \frac{\partial f}{\partial \eta}+\left\langle \nabla_{\nu}\nu,\eta \right\rangle f=0 & \text{on} \ \partial \Sigma.
  \end{array}\right.
$$
Let $h>0$ be a first  eigenfunction. We want to use a specific function $f$ orthogonal to $h$ and containing information about the topology of $\Sigma$ in the second variation formula (index form) above. Using arguments as in \cite{H}, \cite{LY} page 274 we have the following:

\begin{lemma} \label{hersch}
There exists a conformal map $f: \Sigma \rightarrow S^{2}$ such that $\int_{\Sigma} f h \, d\mu=0$ and $f$ has degree $\leq [\frac{g+3}{2}]$.
\end{lemma}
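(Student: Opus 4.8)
The plan is to follow the classical Hersch/Li–Yau balancing argument, adapted to the bordered surface $\Sigma$. First I would realize $\Sigma$ conformally: since $\Sigma$ is a compact orientable surface of genus $g$ with $k\geq 1$ boundary components, it carries a conformal structure, and by a theorem on existence of meromorphic functions of controlled degree on a bordered Riemann surface (doubling $\Sigma$ across its boundary to obtain a closed surface $\widehat\Sigma$ of genus $2g+k-1$, or directly using the Riemann–Roch/Brill–Noether bound on the double), one obtains a nonconstant holomorphic map $\varphi_0:\Sigma\to S^2$ whose degree is at most $\lfloor\frac{g+k+1}{2}\rfloor$ in general; in the setup relevant here one can arrange degree $\leq\lfloor\frac{g+3}{2}\rfloor$ by choosing $\varphi_0$ to respect the reflection symmetry of the double so that $\partial\Sigma$ maps to a great circle (equator) of $S^2$ and the map on $\Sigma$ is ``half'' of the map on $\widehat\Sigma$. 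I would cite \cite{H} and \cite{LY} p.274 for the precise degree count, which is where the bound $[\frac{g+3}{2}]$ comes from.

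Next comes the balancing step. Composing $\varphi_0$ with conformal automorphisms (Möbius transformations) $g_a$ of $S^2$, indexed by $a$ in the open unit ball $B^3\subset\mathbb{R}^3$ (thinking of $S^2\subset\mathbb{R}^3$ and $g_a$ the conformal dilation centered at $a$), I get a family $f_a=g_a\circ\varphi_0:\Sigma\to S^2\subset\mathbb{R}^3$, each still conformal and of the same degree. Define the center-of-mass map $\Phi:B^3\to\mathbb{R}^3$ by
\[
  \Phi(a)=\frac{\int_\Sigma f_a\, h\, d\mu}{\int_\Sigma h\, d\mu}.
\]
Since $h>0$ (it is the first eigenfunction) the measure $h\,d\mu$ is a positive finite measure on $\Sigma$, so $\Phi(a)$ lies in the open convex hull of $f_a(\Sigma)\subset \bar B^3$, hence $|\Phi(a)|<1$ for each $a$, i.e. $\Phi$ maps $B^3$ into $B^3$. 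The standard argument (as in Li–Yau) shows $\Phi$ extends continuously to $\bar B^3$ with $\Phi|_{S^2}=\mathrm{id}$ — because as $a\to a_0\in S^2$ the maps $g_a$ concentrate all mass at the point $a_0$, so $f_a$ pushes $h\,d\mu$ to a Dirac mass at $a_0$ and $\Phi(a)\to a_0$. A continuous self-map of the closed ball restricting to the identity on the boundary is surjective (no retraction of $\bar B^3$ onto $S^2$), so in particular there exists $a^*\in B^3$ with $\Phi(a^*)=0$. Setting $f=f_{a^*}$ gives a conformal map $\Sigma\to S^2$ with $\int_\Sigma f\,h\,d\mu=0$ and $\deg f=\deg\varphi_0\leq[\frac{g+3}{2}]$, as desired. (Here $f$ is $\mathbb{R}^3$-valued and the orthogonality is componentwise, which is what is used in the second-variation estimate afterward.)

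The main obstacle I expect is the degree count for the bordered surface: getting the sharp bound $\lfloor\frac{g+3}{2}\rfloor$ rather than a cruder bound involving $k$. The honest route is to double $\Sigma$ across $\partial\Sigma$ to a closed surface $\widehat\Sigma$ equipped with an antiholomorphic involution $\sigma$ fixing $\partial\Sigma$, and to produce a meromorphic function on $\widehat\Sigma$ that is equivariant (real) with respect to $\sigma$; such a function restricts to a map $\Sigma\to S^2$ carrying $\partial\Sigma$ to the equator, and its degree as a map of $\Sigma$ is half its degree on $\widehat\Sigma$. One then needs a Brill–Noether / Riemann–Roch estimate for the minimal degree of such an equivariant pencil. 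This symmetric gonality bound is exactly the content of the arguments ``as in \cite{H}, \cite{LY} page 274'' that the lemma invokes, so for the purposes of this paper I would state the conformal realization and balancing as above and defer the degree bound to those references; the remaining analytic content — continuity and boundary behavior of $\Phi$, and invocation of the no-retraction theorem — is routine and proceeds verbatim as in the closed case.
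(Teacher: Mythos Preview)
Your balancing argument (the center-of-mass map $\Phi$, its continuous extension to $\bar B^3$ with identity boundary values, and the no-retraction conclusion) matches the paper's proof essentially verbatim.

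The construction of the initial conformal map, however, is where you diverge from the paper, and this is where your proposal has a gap. You double $\Sigma$ across $\partial\Sigma$ to a closed surface of genus $2g+k-1$, and then you need an \emph{equivariant} gonality bound for this real Riemann surface in order to end up with degree $\leq[\frac{g+3}{2}]$ on $\Sigma$. You correctly flag this as ``the main obstacle,'' but then defer it to \cite{H} and \cite{LY}~p.~274 --- those references contain the balancing trick, not a symmetric Brill--Noether bound of the kind you need. So as written, the degree bound is unproved.

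The paper sidesteps this entirely: instead of doubling, it \emph{caps off} each boundary component of $\Sigma$ with a disk, obtaining a closed Riemann surface $\bar\Sigma$ of genus exactly $g$ containing $\Sigma$ as a subdomain. The classical bound (cited from \cite{FK}) then gives a conformal map $\psi:\bar\Sigma\to S^2$ of degree $\leq[\frac{g+3}{2}]$, and one simply restricts $\psi$ to $\Sigma$. No symmetry or equivariance is needed, and the bound involves only $g$, not $k$, for free. (Note also that with this construction $\partial\Sigma$ is not required to land on an equator --- the map $f$ is just an $\mathbb{R}^3$-valued conformal map on $\Sigma$, and that is all the subsequent second-variation computation uses.)
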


\begin{proof}
By gluing a disk on each boundary component of $\Sigma$, we may view $\Sigma$ as a domain in a compact surface $\bar{\Sigma}$ of genus $g$. There exists a conformal map from this closed surface to the sphere
\[
      \psi: \bar{\Sigma} \rightarrow S^2
\]
of degree $\leq \left[\frac{g+3}{2}\right]$ (see \cite{FK}). Let $G$ be the group of conformal diffeomorphisms of ${S^2}$. We claim there exists $\varphi \in G$ such that
\[
       \int_{\Sigma} (\varphi \circ \psi) \,h \; d\mu =0.
\]
To see this, recall that the conformal transformation group $G$ contains a subgroup which is homeomorphic to $B^3$. That is, given $a \in B^3$ that is not the origin, let $\theta(a)= a/|a|  \in S^{2}$, and let $\varphi(t)$ be the one parameter family of conformal transformations of the ball $B^3$ that are dilations on the sphere fixing the opposite poles $\theta(a)$ and $-\theta(a)$. In the group $\varphi(t)$ there is a unique conformal automorphism $\varphi_a$ that maps the origin to $a$. Define $H: B^3 \rightarrow B^3$ by
\[
     H(a)= \frac{1}{\int_\Sigma h \, d\mu} \int_{\Sigma} (\varphi_a \circ \psi) h \; d\mu.
\]
As $a$ approaches the boundary $\partial B^3$,
\[
        \varphi_a(S^{2} \setminus \{-a\}) \rightarrow a
\]
and so
\[
        \int_{\Sigma} (\varphi_a \circ \psi) \; d\mu  \rightarrow a \; \int_\Sigma h \; d\mu.
\]
Therefore, $H$ extends continuously to a map $H: \overline{B^3} \rightarrow \overline{B^3}$ which is the identity on $\partial B^3$. By a standard argument in topology, $H$ must be surjective. Therefore there exists $a \in B^3$ such that $H(a)=0$, as claimed. 
\end{proof}

Now we prove Theorem \ref{thm2}.

\begin{proof}[Proof of theorem \ref{thm2}] 
Let $\Sigma$ be a solution to the free boundary problem $(\Sigma,\partial \Sigma) \rightarrow (N, \partial N)$.
Choose a local orthonormal frame $\{e_{1},e_{2},e_{3}\}$ along $\Sigma$ such that $e_{1}=T$ is the positively oriented unit tangent vector and $e_{2}=\eta$ is the outward unit conormal along $\partial \Sigma$, and $e_{3}=\nu$ is the globally defined unit normal to $\Sigma$. 
For $1\leq i<j\leq 3$, let $R_{ijij}$ denote the sectional curvature of $N$ for the section $e_{i}\wedge e_{j}$. Let $R=R_{1212}+R_{1313}+R_{2323}$ be the scalar curvature of $N$, and let
\begin{equation} \label{5.1}
  R_{33}=R_{1313}+R_{2323}
\end{equation}
be the Ricci curvature for $e_{3}=\nu$. Let $K$ denote the Gauss curvature of $\Sigma$. From the Gauss equation and the fact that $\Sigma$ is minimal we have 
\begin{equation} \label{5.2}
  K=R_{1212}-\frac{1}{2}|A|^{2}.
\end{equation}

First we assume that $\Sigma$ has index 1. Let $h \geq 0$ be a first eigenfunction of the index form of $\Sigma$. By Lemma \ref{hersch} there exists a conformal map $f: \Sigma \rightarrow S^{2}$ of degree $\leq [\frac{g+3}{2}]$ 
such that $\int_{\Sigma} f h \, d\mu=0$. Since $\Sigma$ has index 1 and the component functions $f_{i}$ of $f$ are orthogonal to $h$, we have 
$$
  I(f_{i},f_{i})=\int_{\Sigma}\left(\left|\nabla f_{i}\right|^{2}-\left(R_{33} + |A|^{2}\right)f_{i}^{2}\right) \,    d\mu+\int_{\partial \Sigma}\left\langle \nabla_{\nu}\nu,\eta\right\rangle f_{i}^{2} \, ds \geq 0.
$$
Summing over $i$, and using $\sum_{i=1}^3|f_{i}|^{2}=1$, we get 
$$
  \int_{\Sigma}\left( \left|\nabla f\right|^{2}-\left(R_{33} + |A|^{2}\right)\right) \, d\mu+\int_{\partial                    \Sigma}\left\langle \nabla_{\nu}\nu,\eta \right\rangle \, ds \geq 0.
$$ 
Since $f: \bar{\Sigma} \rightarrow S^2$ is conformal,
$$
  \int_{\Sigma}\left|\nabla f\right|^{2} \, d\mu <\int_{\overline{\Sigma}}\left|\nabla f\right|^{2} \,                      d\mu=2\text{Area}(f(\overline{\Sigma})) =2\text{Area}(S^{2})\cdot \text{deg}(f)\leq 8\pi\Big[\frac{g+3}{2}\Big].
$$
Therefore 
\begin{equation} \label{5.3}
  \int_{\Sigma}(R_{33} + |A|^{2}) \, d\mu < 8\pi\Big[\frac{g+3}{2}\Big] + \int_{\partial \Sigma}\left\langle                      \nabla_{\nu}\nu,\eta\right\rangle \, ds.
\end{equation}

Now we prove the three parts of the theorem.

\

\textbf{Part (i)} $Ric(N) \geq 0$ and $\partial N$ is weakly convex.

\vspace{2mm}

From (\ref{5.1}) and (\ref{5.2}) we have
\begin{equation}  \label{5.4}
  R_{33}+2K=R_{11}+R_{22}-|A|^{2}.
\end{equation}
Inserting (\ref{5.4}) into (\ref{5.3}), we get 
\begin{equation} \label{5.5}
  \int_{\Sigma}(R_{11}+R_{22}-2K) \, d\mu  < 8\pi\Big[\frac{g+3}{2}\Big] + \int_{\partial \Sigma}\left\langle                     \nabla_{\nu}\nu,\eta\right\rangle \, ds.
\end{equation}
By the Gauss-Bonnet theorem,
\begin{equation*}
  \int_{\Sigma}K \, d\mu+\int_{\partial \Sigma}k_{g} \,ds=2\pi \chi(\Sigma)=2\pi(2-2g-k),
\end{equation*} 
where $k_{g}$ is the geodesic curvature of $\partial \Sigma$ in $\Sigma$. Recall that $k_{g}=-\left\langle \nabla_{T} T,\eta\right\rangle$, so 
\begin{equation} \label{5.6}
  \int_{\Sigma}K\, d\mu-\int_{\partial \Sigma}\left\langle \nabla_{T} T,\eta \right\rangle \,                                ds=2\pi(2-2g-k).
\end{equation}
Inserting (\ref{5.6}) into (\ref{5.5}), and using the assumption that $Ric(N) \geq 0$, we get
\[
  4\pi(2g+k-2)<8\pi\Big[\frac{g+3}{2}\Big] + \int_{\partial \Sigma}\left\langle \nabla_{\nu}\nu,\eta\right\rangle   \, ds +2\int_{\partial \Sigma}\left\langle \nabla_{T} T,\eta\right\rangle \, ds.
\]
Since $\eta$ is orthogonal to $\partial N$ and $\partial N$ is weakly convex we get 
$$ 
  g+\frac{k}{2}-1<\Big[\frac{g+3}{2}\Big].
$$ 
Since 
$$
  \Big[\frac{g+3}{2}\Big]=\frac{g+3-\frac{1+(-1)^{g}}{2}}{2},
$$ 
it follows 
$$
  g+k+\frac{1+(-1)^{g}}{2}<5.
$$
From this we obtain  $i)$  $g+k \leq 3$ if $g$ is even, $ii)$ $g+k \leq 4$ if $g$ is odd.

\

\textbf{Part (ii)} $R \geq 0$ and $\partial N$ is weakly mean convex. 

\vspace{2mm}

Adding (\ref{5.1}) and (\ref{5.2}), we have 
\begin{equation} \label{5.7}
  R_{33}+K=R-\frac{1}{2}|A|^{2}.
\end{equation}
Inserting (\ref{5.7}) into (\ref{5.3}), we obtain 
\begin{equation}  \label{5.8}
  \int_{\Sigma} (R-K+\frac{1}{2}|A|^{2}) \, d\mu < 8\pi\Big[\frac{g+3}{2}\Big] + \int_{\partial \Sigma}\left\langle               \nabla_{\nu}\nu,\eta\right\rangle \, ds.
\end{equation} 
Then using the nonnegative scalar curvature assumption and (\ref{5.6}), we get 
\begin{equation*}
  -2\pi(2-2g-k)<8\pi\Big[\frac{g+3}{2}\Big]+\int_{\partial \Sigma}\left\langle                                   \nabla_{\nu}\nu,\eta\right\rangle\, ds +\int_{\partial \Sigma}\left\langle \nabla_{T}T,\eta\right\rangle \, ds.
\end{equation*} 
Since $\partial N$ is weakly mean convex we obtain 
$$
  g+\frac{k}{2}-1 < 2\Big[\frac{g+3}{2}\Big].
$$ 
Then 
$$
  g+\frac{k}{2}-1<g+3-\frac{1+(-1)^{g}}{2}.
$$ 
From this we obtain $i)$  $k \leq 5$ if $g$ is even,
 $ii)$ $k \leq 7$ if $g$ is odd.

We now assume that $\Sigma$ is stable; that is, $I(f,f) \geq 0$ for all $f \in W^{1,2}(\Sigma)$. Taking $f$ to be a constant function, we obtain
$$
  \int_{\Sigma}( R_{33} + |A|^{2}) \, d\mu \leq \int_{\partial \Sigma}\left\langle \nabla_{\nu}\nu,\eta\right\rangle \, ds .
$$
Using (\ref{5.7}) we get
$$
  \int_{\Sigma} (R-K+\frac{1}{2}|A|^{2}) \, d\mu \leq \int_{\partial \Sigma}\left\langle \nabla_{\nu}\nu,\eta\right\rangle \, ds. $$
By (\ref{5.6}) we get
\begin{equation} \label{5.9}
  \int_{\Sigma} \big(R+\frac{1}{2}|A|^{2}\big)\, d\mu -2\pi (2-2g-k)\leq \int_{\partial \Sigma}\left\langle       \nabla_{\nu}\nu,\eta\right\rangle \, ds+\int_{\partial \Sigma}\left\langle \nabla_{T}T,\eta\right\rangle \, ds.
  \end{equation}
Then since $R \geq 0$ and $\partial N$ is weakly mean convex, we get
$$
  g+\frac{k}{2}-1 \leq 0.
$$ 
Therefore the only possibilities for $(g,k)$ are $(0,1)$ or $(0,2)$ and $\Sigma$ must be a disk or a cylinder.

If $\Sigma$ is a cylinder, from the above we must have $R=0$ and $|A|^{2}=0$ on $\Sigma$, and $I(1,1)=0$. Therefore $f=1$ satisfies the Jacobi  equation
$$
  \Delta f + (Ric(\nu)+|A|^{2})f =0.
$$
This implies $Ric(\nu)=0$, and then from (\ref{5.7}), $K=0$. So $\Sigma$ is a totally geodesic flat cylinder. 

\

\textbf{Part (iii)} We now derive the area estimates.

\vspace{2mm}

If $\Sigma$ has index 1, from (\ref{5.8}) and (\ref{5.6}) we get 
$$
  R_0 \cdot \text{Area}(\Sigma) \leq 8\pi\Big(\Big[\frac{g+3}{2}\Big]-\frac{1}{2}\big(g+\frac{k}{2}-1\big)\Big),
$$ 
and so 
$$
  \text{Area}(\Sigma) \leq \frac{2\pi(7-(-1)^{g}-k)}{R_0}.
$$

If $\Sigma$ is stable, from (\ref{5.9}) we get 
$$
  R_0\cdot\text{Area}(\Sigma) \leq 4\pi\Big(-\big(g+\frac{k}{2}-1\big)\Big).
$$ 
Then $(g,k)$ has to be $(0,1)$ and $\Sigma$ is a disk. Therefore,
$$
  \text{Area}(\Sigma) \leq \frac{2\pi}{R_0}.
$$
This completes the proof of Theorem \ref{thm2}.
\end{proof}

\begin{remark}
Let $N$ be a compact orientable $3$-manifold with boundary $\partial N \neq \emptyset$. Theorem \ref{thm:existence} and \ref{thm2} imply that if there exists a continuous map from a bordered surface with $g=0$ and $k\geq 3$ or $g\geq 1$ and $k\geq 1$, satisfying the incompressible assumption in Theorem \ref{thm:existence}, then $N$ admits no metric of nonnegative scalar curvature, for which $\partial N$ is weakly mean convex. 

In particular, the assumption is satisfied when $\pi_{1}(N)$ contains a subgroup abstractly isomorphic to the fundamental group of a compact orientable surface with $g\geq 1$ and $k \geq 1$, or $g =0$ and $k \geq 3$, and $\pi_{1}(N,\partial N)=0$. Precisely, the first condition asserts the existence of a compact surface of the same topological type in N, which is incompressible. The second condition can be used to deform each boundary circle of this surface to some loop in $\partial N$ by adding a cylinder to the surface. Thus we obtain a compact orientable surface in $N$ with boundary in $\partial N$ satisfying the incompressible condition. 
\end{remark}

\end{document}